\newtheorem{theorem}{Theorem}[section]
\newtheorem{lemma}[theorem]{Lemma}
\newtheorem{proposition}[theorem]{Proposition}
\newtheorem{corollary}[theorem]{Corollary}
\theoremstyle{definition}
\newtheorem{example}[theorem]{Example}
\theoremstyle{remark}
\newtheorem{remark}[theorem]{Remark}
\numberwithin{equation}{section}
\begin{document}

\title[Classes of operators related to  $m$-isometric operators ]
{ Classes of operators related to   $m$-isometric operators}
\author[S. Mecheri and O. A. M. Sid Ahmed]{Salah Mecheri and  Sid Ahmed Ould Ahmed Mahmoud  }
\address{S. Mecheri\endgraf
  Department of Mathematics\endgraf
Tebessa University  \endgraf
 12002-Tebessa Algeria }
 \email{mecherisalah@hotmail.com}
\address{ Sid Ahmed Ould Ahmed Mahmoud  \endgraf
  Mathematics Department, College of Science, Jouf
University,\endgraf Sakaka P.O.Box 2014. Saudi Arabia}
\email{sidahmed@ju.edu.sa}
\keywords{$m$-isometries, $n$-quasi-$m$-isometries, $(m,C)$-isometries, $n$-quasi-$(m,C)$-isometries}
\subjclass[2010]{Primary 47B99; Secondary 47A05.}
\begin{abstract}
 Isometries played a pivotal role in the development of operator theory, in particular with the theory
of contractions and polar decompositions and has been widely studied due to its fundamental importance in the theory of stochastic processes,
the intrinsic problem of modeling the general contractive operator via its isometric dilation and many other areas in applied mathematics.
In this paper we present some properties of $n$-quasi-$(m,C)$-isometric operators. We show that a power of a $n$-quasi-$(m, C)$-isometric operator is again
a $n$-quasi-$(m,C)$-isometric operator and some products and tensor products of
$n$-quasi-$(m,C)$-isometries are again n-quasi-$(m,C)$-isometries.
\end{abstract}

\maketitle

\section{Introduction }

\vspace*{-1.5pt}
\quad  Let $\mathcal{H}$ be a separable infinite dimensional complex  Hilbert space with inner product $\langle .\;| \;.\rangle$,  $\mathcal{B}(\mathcal{H})$  be the set of all bounded linear operators on $\mathcal{H}$, and $I=I_\mathcal{H}$ be the identity operator. For every $T\in \mathcal{B}(\mathcal{H})$  its range is denoted by  $\mathcal{R}(T)$, its null space by $\mathcal{N}(T)$. The adjoint of $T$ is denoted  by $T^*$. A subspace $\mathcal{M}\subset \mathcal{H}$   is invariant for $T$ (or $T$-invariant) if $T\mathcal{M}\subset \mathcal{M}$. As usual, the orthogonal complement and  the closure of $\mathcal{M}$ are denoted $\mathcal{M}^\perp$ and $\overline{\mathcal{M}}$ respectively. We denote by $P_\mathcal{M}$ the orthogonal projection on $\mathcal{M}$.
\par \vskip 0.2 cm
\noindent A conjugation is a conjugate-linear operator $C : \mathcal{H} \longrightarrow \mathcal{ H}$, which is both involutive
(i.e., $C^2 = I )$ and isometric (i.e., $\langle Cx\;|\;Cy\rangle=\langle y\;|\;x\rangle  \quad (\forall\; x, y \in \mathcal{H})$).

 \par \vskip 0.2 cm
\noindent
 Recall that if $C$ is a conjugation on $\mathcal{H}$, then $\|C\|=1$, $\big(CTC\big)^k=CT^kC$ and $\big( CTC\big)^*=CT^*C$ for every positive integer $k$ (see \cite{GP} and \cite{GPP}
for more details).\par \vskip 0.2 cm
\noindent
Throughout this paper, let $m$ and $n$ be natural numbers. An operator
$T \in \mathcal{B}(\mathcal{H})$ is said to be :\par \vskip 0.2 cm
\noindent
 \;$m$-isometry
 if \begin{equation}\label{(1.1)}
\displaystyle\sum_{0\leq
k \leq m}(-1)^{m-k}\binom{m}{k}T^{*m-k}T^{m-k}=0,
\end{equation}
or equivalently if
\begin{equation}
\sum_{0\leq k\leq m}(-1)^{m-k}\binom{m}{k}\|T^kx\|^2=0\;\;\quad\forall\;\;x\in{\mathcal H}.
\end{equation}
\noindent where $\displaystyle \binom{m}{k}$ is the binomial coefficient. These class of operators have been introduced  and studied by J. Agler and M. stankus in \cite{AS1}, \cite{AS2} and \cite{AS3}.
In recent years, the $m$-isometric operators have received substantial
attention.  It has been proved in \cite{BMM} and  \cite{BMN2} that
the powers of an $m$-isometry are $m$-isometries and some products of $m$-isometries
are again $m$-isometries. On the other hand, the perturbation of $m$-isometries by
nilpotent operators has been considered in \cite{BMN1}, \cite{TMVN}, \cite{FYH} and the dynamics of $m$-isometries
has been explored in \cite{FB} and other papers. Furthermore, Duggal studied
the tensor product of $m$-isometries in \cite{BD}.  In addition, $m$-isometry weighted shift operators have been discussed
in \cite{ABL} and the reference therein. S. Mecheri and T.Parasad  in \cite{MP} extended
the notion of $m$-isometric operator to the case of $n$-quasi-$m$-isometric operators  of
bounded linear operators on a Hilbert space. An operator $T\in {\mathcal B}({\mathcal H})$ is said to be
 $n$-quasi-$m$-isometric operator  if
\begin{eqnarray}\label{(1.3)}
T^{*n}\bigg(\sum_{0\leq k\leq m}(-1)^k\binom{m}{k}T^{*m-k}T^{m-k}\bigg)T^n=0,
\end{eqnarray}
\noindent The $1$-quasi-isometries are shortly called quasi-isometries, such
operators being firstly studied  in \cite{SP1} and \cite{SP2}.\par\vskip 0.2 cm \noindent
In \cite{CKL}, M. Ch\={o}, E. Ko and J. Lee introduced
$(m,C)$-isometric operators with conjugation $C$   and studied properties of such operators. For an operator $T \in {\mathcal B}({\mathcal H}$) and an integer $ m \geq 1$ ,
$T$ is said to be an $(m,C)$-isometric operator if there exists some conjugation $C$ such that
\begin{equation}
\sum_{0\leq k\leq m}(-1)^{k}\binom{m}{k}T^{*m-k}CT^{m-k}C=0.
\end{equation}
According to definitions of $m$-isometry, $n$-quasi-$m$-isometry and $(m,C)$-isometry, The authors in \cite{OCL} define an $n$-quasi-$(m,C)$-isometry
$ T$ as follows. An operator $T$ is said to be an $n$-quasi-$(m,C)$-isometric operator if there exists some conjugation
$C$ such that
\begin{equation}
T^{*n}\bigg(\sum_{0\leq k\leq m}(-1)^{k}\binom{m}{k}T^{*m-k}CT^{m-k}C\bigg)T^n=0.
\end{equation}
It is easy to see that the class of $n$-quasi-$(m,C)$-isometry contains every  $(m,C)$-isometric operators  with conjugation $C$.In general, this inclusion relation is  proper (see \cite{OCL}). Many
results  about the class of $n$-quasi-$(m,C)$-isometric operators have been found in \cite{OCL}.\par \vskip 0.2 cm \noindent
In this paper it is
shown that the operators in this class have many interesting properties in common with  $m$-isometries, $n$-quasi-$m$-isometries  and $(m,C)$-isometric operators.
In particular, we show that the powers of an $n$-quasi-$(m,C)$-isometry are  $n$-quasi-$(m,C)$-isometries and some products  and tensor products of $n$-quasi-$(m, C)$-isometries
are again $n$-quasi-$(m,C)$-isometries. It has also been proved that
the sum of an $n$-quasi-$(m,C)$-isometry and a commuting nilpotent operator of degree $p$ is a
 $(n+p)$-quasi-$(m + 2p- 2)$-isometry.

\section{Main Results}

We begin by the following theorem, which is a
structure theorem for $n$-quasi-$(m, C)$-isometric operators.\par \vskip 0.2 cm \noindent
In \cite{OCL}, the authors studied the matrix representation of $n$-quasi-$(m, C)$-isometric  operator
with respect to the direct sum of $\overline{{\mathcal R}(T^n)}$ and its orthogonal complement. In the
following we give an equivalent condition for $T$ to be $n$-quasi-$(m,C)$-isometric operator.
\begin{theorem} \label{th21} Let $C=C_1\oplus C_2$ be a conjugation on  ${\mathcal H}$ where $C_1$ and $C_2$ are conjugation on $\overline{{\mathcal R}(T^n)}$ and ${\mathcal N}({T^{\ast}}^n)$, respectively. Assume that  ${\mathcal R}(T^n)$ is not dense, then the following statements are equivalent:\par \vskip 0.2 cm \noindent

$(1)$ $T$ is $n$-quasi-$(m,C)$-isometric operator, \par \vskip 0.2 cm \noindent

$(2)$  $T=\left(
         \begin{array}{ccc}
           T_1 & T_2 \\
          0 & T_3
         \end{array}
       \right)$ on ${\mathcal H}=\overline{{\mathcal R}(T^n)}\oplus {\mathcal N}({T^{\ast}}^n)$, where $T_1$ is an $(m,C_1)$-isometric operator on $\overline{{\mathcal R}(T^n)}$, $T_3^n=0$, and $\sigma(T)=\sigma(T_1)\cup\{0\}$ where $\sigma(T)$ is the spectrum of $T$.
\end{theorem}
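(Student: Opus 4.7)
The plan is to set up the block decomposition from the natural $T$-invariant subspace, deduce $T_3^n=0$ essentially for free, interpret the $n$-quasi-$(m,C)$-isometric identity block by block to equivalently say $T_1$ is $(m,C_1)$-isometric, and finally read off the spectrum from standard properties of upper triangular operators.

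Since $\mathcal{R}(T^n)$ is $T$-invariant and $T$ is continuous, $\overline{\mathcal{R}(T^n)}$ is $T$-invariant; together with $\mathcal{N}({T^*}^n)=\overline{\mathcal{R}(T^n)}^\perp$ this yields the upper triangular form $T=\begin{pmatrix} T_1 & T_2 \\ 0 & T_3 \end{pmatrix}$ on $\overline{\mathcal{R}(T^n)}\oplus \mathcal{N}({T^*}^n)$. Expanding gives $T^n=\begin{pmatrix} T_1^n & A_n \\ 0 & T_3^n \end{pmatrix}$ for some $A_n$; since every $T^n\xi$ lies in the first summand, taking $\xi=(0,v)$ forces $T_3^n v=0$ for every $v$, so $T_3^n=0$. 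This holds independently of any $n$-quasi-$(m,C)$-isometric assumption.

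For any bounded operator $M$ on $\mathcal{H}$, the identity $T^{*n}MT^n=0$ is equivalent to $\langle MT^n\xi, T^n\eta\rangle=0$ for all $\xi,\eta\in\mathcal{H}$; since $\{T^n\xi:\xi\in\mathcal{H}\}=\mathcal{R}(T^n)$ is dense in $\overline{\mathcal{R}(T^n)}$, continuity makes this equivalent to $\langle Mx,y\rangle=0$ for all $x,y\in\overline{\mathcal{R}(T^n)}$, which in the block decomposition is exactly $M_{11}=0$. Apply this with $M=\sum_{k=0}^m(-1)^k\binom{m}{k}T^{*m-k}CT^{m-k}C$. Because $C=C_1\oplus C_2$ is block diagonal and $C^2=I$, the operator $\widetilde{T}:=CTC$ is again upper triangular with diagonal blocks $C_1T_1C_1$ and $C_2T_3C_2$, and $CT^{m-k}C=\widetilde{T}^{m-k}$. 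A direct multiplication of upper triangular blocks shows the $(1,1)$ entry of $T^{*(m-k)}\widetilde{T}^{m-k}$ is $T_1^{*(m-k)}C_1T_1^{m-k}C_1$, so
\[
M_{11}=\sum_{k=0}^m(-1)^k\binom{m}{k}T_1^{*(m-k)}C_1T_1^{m-k}C_1,
\]
which vanishes precisely when $T_1$ is $(m,C_1)$-isometric on $\overline{\mathcal{R}(T^n)}$. This establishes the equivalence of the defining identity.

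For the spectrum, a standard argument for upper triangular operators gives $\sigma(T)\subset\sigma(T_1)\cup\sigma(T_3)$ and $\sigma(T_1)\cup\sigma(T_3)\subset\sigma(T)\cup(\sigma(T_1)\cap\sigma(T_3))$. Since $T_3^n=0$ forces $\sigma(T_3)=\{0\}$, the first inclusion yields $\sigma(T)\subset\sigma(T_1)\cup\{0\}$ and the second yields $\sigma(T_1)\setminus\{0\}\subset\sigma(T)$. Finally, $\mathcal{R}(T^n)$ not dense means $T^n$ is not surjective, hence $T$ is not invertible and $0\in\sigma(T)$; combining gives $\sigma(T)=\sigma(T_1)\cup\{0\}$. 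The main obstacle, once the block form is in hand, is the bookkeeping for how $C=C_1\oplus C_2$ interacts with the triangular structure of $T$; the observation that $\widetilde{T}=CTC$ retains the same triangular shape with diagonal blocks $C_iT_iC_i$ collapses the $n$-quasi-$(m,C)$-isometric identity to the vanishing of a single block that is literally the $(m,C_1)$-isometric polynomial in $T_1$.
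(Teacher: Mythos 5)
Your proof is correct and follows essentially the same route as the paper: decompose $T$ with respect to $\mathcal{H}=\overline{\mathcal{R}(T^n)}\oplus\mathcal{N}(T^{*n})$ and reduce the $n$-quasi-$(m,C)$ identity to the vanishing of the $(1,1)$ block of $\Lambda_m(T)$. Your single density argument ($T^{*n}MT^n=0 \iff M_{11}=0$) handles both implications at once, and you supply details the paper only asserts (the nilpotency of $T_3$ and the spectral equality $\sigma(T)=\sigma(T_1)\cup\{0\}$, including why $0\in\sigma(T)$), so if anything your write-up is tighter than the original.
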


\begin{proof} $(1)\Rightarrow (2)$. Consider the matrix representation of $T$ with respect to the decomposition ${\mathcal H}=\overline{{\mathcal R}(T^n)}\oplus {\mathcal N}({T^{\ast}}^n)$:
$$T=\left(
         \begin{array}{ccc}
           T_1 & T_2 \\
          0 & T_3
         \end{array}
       \right)\ \mbox{on}\ {\mathcal H}=\overline{{\mathcal R}(T^n)}\oplus {\mathcal N}({T^{\ast}}^n).$$
Let $P$ be the projection of ${\mathcal H}$ onto $\overline{{\mathcal R}(T^n)}.$
Since  $T$ is an $n$-quasi-$(m,C)$-isometric operator, it follows that
$$P\bigg(\sum_{0\leq k\leq m}(-1)^k\binom{m}{k}T^{*m-k}CT^{m-k}C\bigg)P=0.$$
This means that $$\sum_{0\leq k\leq m}(-1)^k\binom{m}{k}T_1^{*m-k}C_1T_1^{m-k}C_1=0.$$
Hence $T_1$ is  an $(m,C_1)$-isometric operator on $\overline{{\mathcal R}(T^n)}$.
Let $x=x_1\oplus x_2\in\overline{{\mathcal R}(T^n)}\oplus {\mathcal N}({T^{\ast}}^n)={\mathcal{H}}$. If $x\in {\mathcal N}({T^{\ast}}^n)$, then
\begin{eqnarray*}
\langle {T_{3}}^{n}x_2,x_2\rangle&=& \langle T^{n}(I-P)x,(I-P)x \rangle\cr
&=&\langle (I-P)x, {T^{\ast}}^{n}(I-P)x \rangle=0.
\end{eqnarray*}
Hence ${T_{3}}^{n}=0$.
So, we get that $\sigma(T)=\sigma(T_1)\cup\{0\}$.\par \vskip 0.2 cm \noindent

$(2)\Rightarrow(1)$ Suppose that $$ T = \begin{pmatrix} T_{1} & T_{2} \\
                            0 & T_{3} \end{pmatrix} \quad \text{ on } \quad
                              \mathcal{H} = \overline{\mathcal{R}(T^{n})}  \oplus
                              \mathcal{N}(T^{*n}) $$
                                where $ \overline{{\mathcal R}(T^{n})} $ is the closure
                                of $\mathcal{R}(T^{n})$, where
                                  $T_{1}$ is an $(m, C_1)$-isometry and
                                 $T_{3}^{k} = 0$: Since
 $$ T^{ k}= \begin{pmatrix} T^{n}_{1} & \displaystyle\sum_{0\leq j\leq k-1}T^{j}_{1}T_{2}T^{k-1-j}_{3} \\
                            0 & 0 \end{pmatrix}$$
we have
$$T^{*n}(\sum_{0\leq l\leq m} (-1)^{k}\begin{pmatrix} m \\
  k \end{pmatrix}T^{* m-k}CT^{m-k}C)T^{n}=$$
  $$  \begin{pmatrix} T_{1} & T_{2} \\
                            0 & T_{3} \end{pmatrix}^{*n}
                            \bigg(\sum_{0\leq k\leq m}(-1)^{k}\begin{pmatrix} m \\
  k \end{pmatrix}\begin{pmatrix} T_{1} & T_{2} \\
                            0 & T_{3} \end{pmatrix}^{* m-k} \begin{pmatrix}
                                                                C_1 & 0 \\
                                                                0 & C_2 \\
                                                              \end{pmatrix}
                                                             \begin{pmatrix} T_{1} & T_{2} \\
                            0 & T_{3} \end{pmatrix}^{m-k}\begin{pmatrix}
                                                                C_1 & 0 \\
                                                                0 & C_2 \\
                                                              \end{pmatrix}
                                                            \bigg)\begin{pmatrix} T_{1} & T_{2} \\
                            0 & T_{3} \end{pmatrix}^{*n}$$
                            $$=\begin{pmatrix} T^{*n}_{1}DT^{k}_{1} & T^{*n}_{1}D\displaystyle\sum_{0\leq j \leq k-1}T^{j}_{1}T_{2}T^{k-1-j}_{3} \\
  \bigg(\displaystyle\sum_{0\leq j\leq k-1}T^{j}_{1}T_{2}T^{k-1-j}_{3}\bigg)^{*}DT^{k}_{1} & \bigg(\displaystyle\sum_{0\leq j\leq k-1}T^{j}_{1}T_{2}T^{k-1-j}_{3}\bigg)^{*}D\sum_{0\leq j\leq k-1}T^{j}_{1}T_{2}T^{k-1-j}_{3}
  \end{pmatrix},$$ where
  $$ D= \sum_{0\leq k \leq m} (-1)^{k}\begin{pmatrix} m \\
  k \end{pmatrix}T_1^{* m-k}C_1T^{m-k}C_1.$$ This implies that $T^{*n}\bigg(\displaystyle\sum_{0\leq k \leq m}(-1)^{k}\begin{pmatrix} m \\
  k \end{pmatrix}T^{* m-k}CT^{m-k}C\bigg)T^{n}=0$ on $\mathcal{H} = \overline{\mathcal{R}(T^{n})}  \oplus
                              \mathcal{N}(T^{*n}) $. Thus $T$ is $n$-quasi-$(m,C)$-isometric operator.

\end{proof}

\begin{corollary} \label{cor21} If $T$ is a $n$-quasi-$(m, C)$-isometric operator and ${\mathcal R}(T^{n})$ is dense, then $T$
is a $(m, C)$-isometric operator.
\end{corollary}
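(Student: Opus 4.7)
The plan is to give a direct argument, bypassing the block-matrix machinery of Theorem \ref{th21} (which in any case was formulated under the hypothesis that $\mathcal{R}(T^n)$ is \emph{not} dense). Write
$$A := \sum_{0\leq k\leq m}(-1)^{k}\binom{m}{k}T^{*m-k}CT^{m-k}C,$$
so that the assumption that $T$ is $n$-quasi-$(m,C)$-isometric reads $T^{*n}AT^{n}=0$. Taking inner products, this is equivalent to
$$\langle A T^{n}x\mid T^{n}y\rangle = 0 \quad\text{for all } x,y\in\mathcal{H}.$$
My goal is to upgrade this vanishing on $\mathcal{R}(T^n)\times\mathcal{R}(T^n)$ to vanishing on all of $\mathcal{H}\times\mathcal{H}$, which is the $(m,C)$-isometric condition $A=0$.

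To do this I need $A$ to be a bounded \emph{linear} operator, so that the sesquilinear form $(u,v)\mapsto \langle Au\mid v\rangle$ is continuous. Each term $T^{*m-k}CT^{m-k}C$ is bounded: the conjugation $C$ has norm $1$, and the composition $CT^{m-k}C$ is linear (the two conjugate-linearities cancel), as recalled in the introduction. Hence $A\in\mathcal{B}(\mathcal{H})$ and the form $(u,v)\mapsto \langle Au\mid v\rangle$ is jointly continuous on $\mathcal{H}\times\mathcal{H}$.

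Now I invoke the density hypothesis: $\mathcal{R}(T^n)$ is dense in $\mathcal{H}$, so $\mathcal{R}(T^n)\times \mathcal{R}(T^n)$ is dense in $\mathcal{H}\times\mathcal{H}$. The form vanishes on this dense subset by the previous paragraph, and therefore vanishes identically. This forces $\langle Au\mid v\rangle = 0$ for all $u,v\in\mathcal{H}$, i.e.\ $A=0$, which is exactly the defining identity of an $(m,C)$-isometric operator.

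There is no real obstacle: the only point one must be a little careful about is the linearity of $A$, since the definition involves the conjugate-linear $C$; once that is observed, the argument is a one-line density/continuity argument. (Alternatively, one could note that when $\mathcal{R}(T^n)$ is dense one has $\mathcal{N}(T^{*n})=\{0\}$, so the decomposition $\mathcal{H}=\overline{\mathcal{R}(T^n)}\oplus\mathcal{N}(T^{*n})$ degenerates to $\mathcal{H}=\overline{\mathcal{R}(T^n)}$ and the $T_1$ block of Theorem \ref{th21} is simply $T$, but the direct proof above seems cleaner for a corollary.)
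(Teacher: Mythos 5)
Your proof is correct, and it is essentially the argument the paper intends: the corollary is stated without its own proof, and the step in the proof of Theorem \ref{th21} that passes from $T^{*n}\bigl(\sum_k(-1)^k\binom{m}{k}T^{*m-k}CT^{m-k}C\bigr)T^n=0$ to $P(\cdots)P=0$, with $P$ the projection onto $\overline{\mathcal{R}(T^n)}$, is exactly your density/continuity argument, which in the dense-range case gives $P=I$ and hence $\Lambda_m(T)=0$ directly. Your explicit remark that $CT^{m-k}C$ is linear and bounded, so the form is continuous, is a worthwhile detail the paper leaves implicit.
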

In \cite{CKL}, the authors showed that a power of an $(m, C)$-isometric operator is again a $(m, C)$-isometric operator.
In the following theorem we show that this remains true for $n$-quasi-$(m, C)$-isometric operators.
\begin{theorem} \label{th22}
Let $C=C_1\oplus C_2$ be a conjugation on  ${\mathcal H}$ where $C_1$ and $C_2$ are conjugation on $\overline{{\mathcal R}(T^n)}$ and ${\mathcal N}({T^{\ast}}^n)$, respectively.
If $T$ is a $n$-quasi-$(m, C)$-isometric operator, then so is $T^{k}$ for every
natural number $k$.
\end{theorem}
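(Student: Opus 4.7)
The plan is to reduce to the block decomposition provided by Theorem~\ref{th21} and then verify the defining $n$-quasi-$(m,C)$-isometric identity for $T^k$ by a direct block-matrix calculation, using only two ingredients beyond the theorem: the Ch\={o}--Ko--Lee result \cite{CKL} that powers of $(m,C)$-isometries are again $(m,C)$-isometries, and the nilpotency relation $T_3^n=0$ given by Theorem~\ref{th21}.

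I would first dispose of the case in which $\mathcal R(T^n)$ is dense: Corollary~\ref{cor21} then gives that $T$ itself is an $(m,C)$-isometry, so by \cite{CKL} the power $T^k$ is also an $(m,C)$-isometry, and in particular $n$-quasi-$(m,C)$-isometric. Assume therefore that $\mathcal R(T^n)$ is not dense, and use Theorem~\ref{th21} to write
\[
T=\begin{pmatrix} T_1 & T_2\\ 0 & T_3\end{pmatrix}\quad\text{on}\quad \overline{\mathcal R(T^n)}\oplus\mathcal N(T^{*n}),
\]
where $T_1$ is $(m,C_1)$-isometric and $T_3^n=0$.

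A routine induction gives $T^{j}=\bigl(\begin{smallmatrix}T_1^{j} & S_j\\ 0 & T_3^{j}\end{smallmatrix}\bigr)$ with $S_j=\sum_{i=0}^{j-1}T_1^{i}T_2 T_3^{j-1-i}$, so specialising at $j=kn$ and using $T_3^{kn}=(T_3^n)^k=0$ (for $k\geq 1$; the case $k=0$ being trivial) yields
\[
(T^k)^n=\begin{pmatrix}T_1^{kn} & S_{kn}\\ 0 & 0\end{pmatrix}.
\]
By \cite{CKL}, the operator $T_1^{k}$ is $(m,C_1)$-isometric, hence
\[
D_k:=\sum_{l=0}^{m}(-1)^l\binom{m}{l}(T_1^k)^{*(m-l)}C_1(T_1^k)^{m-l}C_1=0.
\]
Substituting the block forms of $T^k$, $(T^k)^{*(m-l)}$ and $C=C_1\oplus C_2$ into the middle sum
\[
M:=\sum_{l=0}^{m}(-1)^l\binom{m}{l}(T^k)^{*(m-l)}C(T^k)^{m-l}C,
\]
the top-left block of $M$ comes out to be exactly $D_k=0$, so $M=\bigl(\begin{smallmatrix}0 & \beta\\ \gamma & \delta\end{smallmatrix}\bigr)$ for some operators $\beta,\gamma,\delta$. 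A direct block multiplication then gives $(T^k)^{*n}M(T^k)^n=0$, since the zero bottom block row of $(T^k)^n$ combines with the vanishing top-left of $M$ to kill the top row of $M(T^k)^n$, after which the zero right block column of $(T^k)^{*n}$ finishes the job. This is exactly the $n$-quasi-$(m,C)$-isometric identity for $T^k$.

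The main (essentially the only) obstacle is bookkeeping: the middle sum contains $m+1$ four-fold block products, and one must keep the block operator $T^k$ carefully distinct from its subsequent powers $(T^k)^{m-l}=T^{k(m-l)}$ so that the off-diagonal terms $S_{j}$ are not mis-indexed. Once the two algebraic inputs $D_k=0$ and $T_3^{kn}=0$ are in hand, the sandwich collapses immediately.
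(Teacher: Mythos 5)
Your proof is correct and follows essentially the same route as the paper: split on whether $\mathcal R(T^n)$ is dense, use the block decomposition from Theorem~\ref{th21}, invoke \cite{CKL} to get that $T_1^k$ is $(m,C_1)$-isometric, and verify the sandwich identity for $T^k$ from the upper-triangular form with $T_3^{kn}=0$. Your explicit block computation of $(T^k)^{*n}M(T^k)^n=0$ on the original decomposition $\overline{\mathcal R(T^n)}\oplus\mathcal N(T^{*n})$ is in fact slightly cleaner than the paper's bare appeal to Theorem~\ref{th21} applied to $T^k$ (which strictly concerns the decomposition relative to $\overline{\mathcal R(T^{kn})}$), but the underlying argument is the same.
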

\begin{proof} If ${\mathcal R}(T^n)$ is dense, then $T$ is an $(m,C)$-isometric operator and so is $T^k$ for every positive integer $k$.\par \vskip 0.2 cm \noindent If ${\mathcal R}(T^n)$ is not  dense. By Theorem \ref{th21} we write the matrix representation of $T$ on ${\mathcal H} =\overline{{\mathcal R}(T^n)}\oplus {\mathcal N} (T^{*n})$ as follows
 $$ T = \begin{pmatrix} T_{1} & T_{2} \\
                            0 & T_{3} \end{pmatrix} \quad \text{ on } \quad
                              \mathcal{H} = \overline{\mathcal{R}(T^{n})}  \oplus
                              \mathcal{N}(T^{*n}) $$ where $T_1$ is an $(m,C_1)$-isometric operator. By
 \cite[ Theorem 2.1]{CKL}, $T^{k}_{1}$ is an $(m, C_1)$-isometric operator. Since
$$T^{k}=\begin{pmatrix} T^{k}_{1} & \displaystyle\sum_{0\leq j\leq k-1}T^{j}_{1}T_{2}T^{k-1-j}_{3} \\
                            0 & T^{k}_{3} \end{pmatrix} \,\;\;\mbox{on}\; \mathcal{H} = \overline{\mathcal{R}(T^{n})}  \oplus
                              \mathcal{N}(T^{*n}). $$ Thus $T^{k}$ for every natural number $k$ is a $n$-quasi-$(m, C)$-isometric operator by Theorem 2.1.
\end{proof}
\begin{remark}
       The converse of Theorem \ref{th22} in not true in general as shown in the following example.
       \end{remark}

\begin{example}
Let $C$ be a conjugation on $\mathbb{C}^2 $ defined by $ C(x_1, x_2) = ( \overline{x_2},-\overline{x_1})$ and consider the
operator matrix $T=\left(
                     \begin{array}{cc}
                       -1 & - 1\\
                       3 & 2 \\
                     \end{array}
                   \right)
$ on $\mathbb{C}^2$. A simple calculation shows that $T^{\ast3}\bigg(T^{*3}CT^3C-I\bigg)T^3 = 0$ and $T^*\bigg(T^*CTC-I\bigg)T\not=0$. So, we obtain
that $T^3$ is a quasi-$(1,C)$-isometric operator, but $T$ it is not a quasi- $(1,C)$-isometric operator.
\end{example}

\par \vskip 0.2 cm \noindent It was observed that every $(m,C)$-isometric operator is an $(k,C))$-isometric operator for every integer $k\geq m.$ In the following proposition we show that this remains true for  $n$-quasi-$(m,C)$-isometric operator.

\begin{proposition} \label{pro21}

Let $T\in {\mathcal B}({\mathcal H})$ and
 let $C=C_1\oplus C_2$ be a conjugation on  ${\mathcal H}$ where $C_1$ and $C_2$ are conjugation on $\overline{{\mathcal R}(T^n)}$ and ${\mathcal N}({T^{\ast}}^n)$, respectively.
 If $T$ is an $n$-quasi-$(m, C)$-isometric operator, then $T$ is an $l$-quasi-$(k, C)$-isometric operator for every positive integers $k\geq m$ and $l\geq n$.\end{proposition}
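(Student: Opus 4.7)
The plan is to split the assertion into two independent enlargements. Writing
\[
\Delta_{k}(T):=\sum_{0\leq j\leq k}(-1)^{j}\binom{k}{j}T^{\ast(k-j)}CT^{k-j}C,
\]
the hypothesis reads $T^{\ast n}\Delta_{m}(T)T^{n}=0$. Enlarging the quasi-index $n$ to any $l\geq n$ is purely algebraic:
\[
T^{\ast l}\Delta_{m}(T)T^{l}=T^{\ast(l-n)}\bigl(T^{\ast n}\Delta_{m}(T)T^{n}\bigr)T^{l-n}=0,
\]
so it is enough to prove that an $n$-quasi-$(m,C)$-isometry is automatically an $n$-quasi-$(k,C)$-isometry whenever $k\geq m$.

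For this second step I would appeal to Theorem~\ref{th21}. If $\mathcal{R}(T^{n})$ is dense, Corollary~\ref{cor21} already gives that $T$ is an $(m,C)$-isometry, and the classical monotonicity for $(m,C)$-isometries from \cite{CKL} upgrades it to a $(k,C)$-isometry; being a $(k,C)$-isometry is stronger than being an $n$-quasi-$(k,C)$-isometry, so the conclusion follows. If $\mathcal{R}(T^{n})$ is not dense, Theorem~\ref{th21} yields
\[
T=\begin{pmatrix}T_{1}&T_{2}\\0&T_{3}\end{pmatrix}\quad\text{on}\quad\overline{\mathcal{R}(T^{n})}\oplus\mathcal{N}(T^{\ast n}),
\]
with $T_{1}$ an $(m,C_{1})$-isometry and $T_{3}^{n}=0$. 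By the same monotonicity, $T_{1}$ is a $(k,C_{1})$-isometry, and since $T_{3}^{n}=0$ is unaffected by replacing $m$ by $k$, the converse direction $(2)\Rightarrow(1)$ of Theorem~\ref{th21}---applied with $k$ in place of $m$---delivers that $T$ is $n$-quasi-$(k,C)$-isometric. Combining with the first step then gives the $l$-quasi-$(k,C)$-isometric property.

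The main obstacle is the monotonicity $(m,C)\Rightarrow(k,C)$ for $k\geq m$. If this is not invoked directly from \cite{CKL}, I would establish it by induction on $m$ from the identity
\[
\Delta_{m+1}(T)=T^{\ast}\,\Delta_{m}(T)\,CTC-\Delta_{m}(T),
\]
which follows from Pascal's rule together with the relation $CT^{r}C=(CTC)^{r}$: setting $S:=CTC$, the quantity $\Delta_{m}(T)$ has the same shape as the classical $m$-isometry defect with $T^{r}$ replaced by $S^{r}$, and the well-known $\beta_{m+1}=T^{\ast}\beta_{m}T-\beta_{m}$ recursion of Agler--Stankus carries over verbatim. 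From this identity, $\Delta_{m}(T)=0$ forces $\Delta_{m+1}(T)=0$, and iterating completes the monotonicity step and the proof.
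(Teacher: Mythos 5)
Your proof is correct and follows essentially the same route as the paper: the dense case via Corollary~\ref{cor21} together with the monotonicity of $(m,C)$-isometries, and the non-dense case via the upper-triangular decomposition of Theorem~\ref{th21}, upgrading $T_1$ from an $(m,C_1)$- to a $(k,C_1)$-isometry and reassembling with $(2)\Rightarrow(1)$. Your write-up is in fact slightly more complete than the paper's, which never addresses the passage from quasi-index $n$ to $l\geq n$: your factorization $T^{\ast l}\Delta_{m}(T)T^{l}=T^{\ast(l-n)}\bigl(T^{\ast n}\Delta_{m}(T)T^{n}\bigr)T^{l-n}$ supplies that missing step, and your recursion $\Delta_{m+1}(T)=T^{\ast}\Delta_{m}(T)\,CTC-\Delta_{m}(T)$ correctly justifies the monotonicity $(m,C)\Rightarrow(k,C)$ without any commutativity hypothesis.
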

\begin{proof}
If ${\mathcal R}(T^n)$ is dense, then $T$ is an $(m,C)$-isometric operator and hence $T$ is an $(k,C)$-isometric operator for every positive integer $k\geq m$.\par \vskip 0.2 cm \noindent If ${\mathcal R}(T^n)$ is not dense, by Theorem \ref{th21} we write the matrix representation of T on \\${\mathcal H} =\overline{{\mathcal R}(T^n)}\oplus {\mathcal N} (T^{*n})$ as follows $T=\left(
         \begin{array}{ccc}
           T_1 & T_2 \\
          0 & T_3
         \end{array}
       \right)$ where $T_1=T_{/\overline{{\mathcal R}(T^n)}}$ is an $(m,C_1)$-isometric operator and $T_3^n=0$. Obviously that $T_1$ is an $(k,C_1)$-isometric operator for every integer $k\geq m$. The conclusion follows from the statement $(2)$ of Theorem \ref{th21}.

\end{proof}
\par \vskip 0.1 cm \noindent
For an operator $ T \in {\mathcal B}({\mathcal H})$ and a conjugation $C$, the operator $\Lambda_m(T)$ is define  by
$$\Lambda_m(T):=\sum_{0\leq k \leq m}(-1)^k\binom{m}{k}T^{*m-k}CT^{m-k}C.$$ Then $T$ is an $(m,C)$-isometric operator if and only if $\Lambda_m(T)=0.$ \par \vskip 0.2 cm \noindent
The following lemma gives another condition for which an $n$-quasi-$(m,C)$-isometric operator became an $n$-quasi-$(k,C)$-isometric operator for $k\geq m$.
\begin{lemma}\label{lem21}
Let $T \in {\mathcal B}({\mathcal H})$ be an $n$-quasi-$(m,C)$-isometric operator where $C$ is a conjugation on ${\mathcal H}$. It $T(CTC)=(CTC)T$, then $T$ is an $n$-quasi-$(k,C)$-isometric operator for every positive integer $k\geq m$.
\end{lemma}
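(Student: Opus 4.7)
The plan is to prove the lemma by induction on $k$, with the inductive step reduced to a single recurrence relation for the operator $\Lambda_m(T)$. Let me denote $S:=CTC$, so that the hypothesis $T(CTC)=(CTC)T$ reads $TS=ST$, and note that $CT^jC=S^j$ for every $j\geq 0$ because $C^2=I$. With this notation,
$$\Lambda_m(T)=\sum_{0\leq j\leq m}(-1)^{m-j}\binom{m}{j}T^{*j}S^j$$
after reindexing by $j=m-k$.

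The key step is the identity
$$\Lambda_{m+1}(T)=T^{*}\Lambda_m(T)\,S-\Lambda_m(T),$$
which holds for \emph{any} operator $T$ and any conjugation $C$, with no commutation hypothesis. I would verify it by expanding the right-hand side: multiplying $\Lambda_m(T)$ on the left by $T^{*}$ and on the right by $S$ shifts each index $j$ to $j+1$, and combining the two sums using Pascal's rule $\binom{m+1}{i}=\binom{m}{i-1}+\binom{m}{i}$ (with the usual conventions $\binom{m}{-1}=\binom{m}{m+1}=0$) gives exactly $\Lambda_{m+1}(T)$. This is the main (and essentially only) calculation in the proof.

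Next I would run the induction. Assume $T^{*n}\Lambda_k(T)T^n=0$ for some $k\geq m$ (the base case $k=m$ is the hypothesis that $T$ is $n$-quasi-$(m,C)$-isometric). Sandwich the identity between $T^{*n}$ and $T^n$:
$$T^{*n}\Lambda_{k+1}(T)T^n=T^{*(n+1)}\Lambda_k(T)\,S\,T^n-T^{*n}\Lambda_k(T)T^n.$$
The second term vanishes by the inductive hypothesis. For the first term, the commutation $TS=ST$ iterates to $ST^n=T^nS$, so
$$T^{*(n+1)}\Lambda_k(T)\,S\,T^n=T^{*}\bigl(T^{*n}\Lambda_k(T)T^n\bigr)S=0.$$
Hence $T^{*n}\Lambda_{k+1}(T)T^n=0$, closing the induction and proving that $T$ is $n$-quasi-$(k,C)$-isometric for every $k\geq m$.

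The only nonobvious point is the recurrence for $\Lambda_{m+1}(T)$; once that is in place, the commutation $TS=ST$ is used in exactly one place (to push $S$ past $T^n$), and the inductive bookkeeping is routine. I do not expect any technical obstacle, since the argument parallels the classical recurrence for the defect of an $m$-isometry, with $S=CTC$ playing the role of $T$ on the right and the conjugate-linearity of $C$ absorbed into the identity $CT^jC=S^j$.
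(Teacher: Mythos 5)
Your proposal is correct and follows essentially the same route as the paper: the recurrence $\Lambda_{m+1}(T)=T^{*}\Lambda_m(T)(CTC)-\Lambda_m(T)$ (which the paper cites from the Ch\={o}--Ko--Lee paper rather than verifying via Pascal's rule as you do), sandwiched between $T^{*n}$ and $T^{n}$, with the commutation $T(CTC)=(CTC)T$ used once to move $CTC$ past $T^{n}$, then induction on $k$.
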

\begin{proof}
It is well known that $\Lambda_{m+1}(T)=T^*\Lambda_m(T)(CTC)-\Lambda_m(T)$ (\cite{CKL}). Under the assumptions that  $T$ is an $n$-quasi-$(m,C)$-isometric operator and satisfies $T(CTC)=(CTC)T$, it follows
$$T^{*n}\Lambda_{m+1}(T)T^n=T^{*n+1}\Lambda_m(T)T^n(CTC)-T^{*n}\Lambda_m(T)T^n=0.$$
Therefore $T$ is an $n$-quasi-$(m+1,C)$-isometric operator.
\end{proof}
\par \vskip 0.02 cm \noindent
Let $T \in {\mathcal B}({\mathcal H} )$. Denote by $r(T)$ the spectral radius of $T$, that
is, \\$r(T) = \max\{\;|\lambda| : \lambda \in  \sigma(T) \;\}.$  We say that T is normaloid if $r(T) = \|T\|$.\par \vskip 0.2 cm \noindent
\begin{theorem}\label{th23}
Let $C=C_1\oplus C_2$ be a conjugation on ${\mathcal H}$ where $C_1$ and $C_2$ are  conjugation on $\overline{{\mathcal R}(T^n)}$ and $ {\mathcal N}(T^{*n})$ respectively. Let $T\in {\mathcal B}({\mathcal H})$ be an $n$-quasi-$(m, C)$- isometric operator. Assume that  $T$ is power bounded and $T_1=T_{/{\overline{{\mathcal R}(T^n)}}}$ satisfies $T_1C_1T_1C_1-I$ is normaloid, then $T$ is an $n$-quasi-$(1,C)$-isometric operator.

\end{theorem}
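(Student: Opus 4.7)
The plan is to apply Theorem \ref{th21} to reduce the claim to a statement about the restriction $T_1$ of $T$ to $\overline{\mathcal{R}(T^n)}$, and then to exploit the power-boundedness of $T$ through the operator-valued Newton polynomial structure of $(m,C_1)$-isometries.

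First, assuming $\mathcal{R}(T^n)$ is not dense (the dense case is covered by Corollary \ref{cor21}), the implication $(1)\Rightarrow(2)$ of Theorem \ref{th21} writes
\[
T=\begin{pmatrix}T_1 & T_2 \\ 0 & T_3\end{pmatrix}\ \text{on}\ \mathcal{H}=\overline{\mathcal{R}(T^n)}\oplus\mathcal{N}(T^{*n}),
\]
with $T_1$ an $(m,C_1)$-isometry and $T_3^n=0$. By the reverse implication $(2)\Rightarrow(1)$ it suffices to prove that $T_1$ is $(1,C_1)$-isometric, i.e.\ $\Lambda_1(T_1)=T_1^*C_1T_1C_1-I=0$. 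Since $\overline{\mathcal{R}(T^n)}$ is $T$-invariant and $T_1^k$ appears as the $(1,1)$-block of $T^k$, the power-boundedness of $T$ descends to $T_1$; fix $M$ with $\|T_1^k\|\leq M$ for every positive integer $k$.

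The core step is to establish the Newton forward-difference identity
\[
T_1^{*k}C_1T_1^kC_1 = \sum_{j=0}^{m-1}\binom{k}{j}\Lambda_j(T_1)\qquad(k\geq 0),
\]
which follows by inverting the definition of $\Lambda_j(T_1)$ and iterating the recursion $\Lambda_{j+1}(T_1)=T_1^*\Lambda_j(T_1)(C_1T_1C_1)-\Lambda_j(T_1)$ (recalled in Lemma \ref{lem21}) to obtain $\Lambda_j(T_1)=0$ for every $j\geq m$. Combined with the bound $\|T_1^{*k}C_1T_1^kC_1\|\leq M^2$, the right-hand side is an operator-valued polynomial in $k$ of degree at most $m-1$ that is uniformly bounded in norm. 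Taking inner products against arbitrary vectors $x,y\in\overline{\mathcal{R}(T^n)}$ produces a bounded scalar polynomial of degree $\leq m-1$, which must be constant; comparing the coefficients of $\binom{k}{j}$ then forces $\Lambda_j(T_1)=0$ for $1\leq j\leq m-1$. In particular $\Lambda_1(T_1)=0$, so $T_1$ is $(1,C_1)$-isometric, and Theorem \ref{th21} applied once more returns the claimed $n$-quasi-$(1,C)$-isometry of $T$.

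The main subtlety is the polynomial-comparison step at the operator level: one has to argue, through inner-product pairings, that uniform operator-norm boundedness forces every higher-order coefficient to vanish, peeling $\Lambda_j(T_1)$ off inductively from $j=m-1$ down to $j=1$. The normaloidness hypothesis on $T_1C_1T_1C_1-I$ supports a complementary spectral-radius route: from the Newton expansion and power-boundedness one shows that the relevant perturbation is quasi-nilpotent, and normaloidness then upgrades vanishing spectral radius to vanishing operator norm, yielding $\Lambda_1(T_1)=0$ by an independent argument.
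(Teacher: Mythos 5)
Your proposal is correct and follows the paper's overall skeleton exactly: reduce via Theorem \ref{th21} to the restriction $T_1$ on $\overline{\mathcal{R}(T^n)}$, show $T_1$ is a $(1,C_1)$-isometry, and reassemble with the implication $(2)\Rightarrow(1)$. The genuine difference lies in the middle step: the paper simply invokes \cite[Theorem 3.1]{CKL} (``a power bounded $(m,C_1)$-isometry with $T_1C_1T_1C_1-I$ normaloid is a $(1,C_1)$-isometry''), whereas you prove that step from scratch. Your mechanism is sound: writing $A_k=T_1^{*k}C_1T_1^kC_1$, the recursion $\Lambda_{j+1}(T_1)=T_1^*\Lambda_j(T_1)(C_1T_1C_1)-\Lambda_j(T_1)$ gives $\Lambda_j(T_1)=0$ for $j\geq m$, the inverse binomial transform gives $A_k=\sum_{j=0}^{m-1}\binom{k}{j}\Lambda_j(T_1)$ as an operator identity, and since $\|A_k\|\leq\|T_1^k\|^2\leq M^2$ the scalar polynomials $k\mapsto\langle A_kx\mid y\rangle$ are bounded, hence constant, forcing $\Lambda_j(T_1)=0$ for $1\leq j\leq m-1$. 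This buys a self-contained argument in place of a citation, and it is notable that your main route never actually uses the normaloid hypothesis (your remark about a ``complementary spectral-radius route'' is the only place it appears, and it is not needed); you should double-check this against the precise statement of \cite[Theorem 3.1]{CKL}, since either your argument proves a slightly stronger statement than the theorem as posed, or there is a reason the cited result carries that hypothesis that your sketch glosses over. Two small cosmetic points: the identity $A_k=\sum_{j=0}^{m-1}\binom{k}{j}\Lambda_j(T_1)$ holds for all $k$ only with the convention $\binom{k}{j}=0$ for $j>k$, and the power-boundedness of $T_1$ follows most cleanly from $T_1^k=(T^k)|_{\overline{\mathcal{R}(T^n)}}$ on the $T$-invariant subspace $\overline{\mathcal{R}(T^n)}$, so $\|T_1^k\|\leq\|T^k\|$.
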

\begin{proof} We know that  $T$ admits the following matrix representation
 $T=\left(
                                               \begin{array}{cc}
                                                 T_1 & T_2 \\
                                                0 & T_3 \\
                                               \end{array}
                                             \right)$ on ${\mathcal H}= \overline{{\mathcal R}(T^n)} \oplus {\mathcal N}(T^{*n}).$
Since $T$ is an $n$-quasi-$(m,C)$-isometric operator, it follows in view of Theorem \ref{th21} that $T_1$ is an $(m,C_1)$-isometric operator and $T_3^n=0$. Furthermore $T$ is power bounded then it is easy that $T_1$ is power bounded and satisfies $T_1C_1T_1C_1-I$ is normaloid. By applying \cite[Theorem 3.1]{CKL} we obtain that $T_1$ is an $(1,C_1)$-isometric operator.
According to Theorem \ref{th21} we can deduce that $T$ is an $n$-quasi-$(1,C)$-isometric operator. Thus we complete the proof.
\end{proof}
\begin{lemma}(\cite[Lemma 3.15]{HYM})\label{lem22}
 If $(a_j)_j$ is a sequence of complex numbers and $r,s, m,l$ are positive
integers satisfying
\begin{equation}
\sum_{0\leq k\leq m}(-1)^{k}\binom{m}{k}a_{rk+j}=0
\end{equation}
and
\begin{equation}
\sum_{0\leq k\leq l}(-1)^{k}\binom{l}{k}a_{sk+j}=0
\end{equation}
for all $j\geq 0$,then
\begin{equation}
\sum_{0\leq k\leq q}(-1)^{k}\binom{q}{k}a_{pk}=0,
\end{equation}
where $q$ is the greatest common divisor of $r$ and $s$, and $p$ is the minimum of $m$ and $l$.
\end{lemma}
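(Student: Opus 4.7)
The plan is to recast the two hypotheses as the annihilation of the sequence $a = (a_j)_{j \geq 0}$ by two polynomials in the shift operator, and then extract the conclusion from a polynomial gcd computation together with Bezout's identity. Let $S$ denote the shift on sequences, $(Sa)_j = a_{j+1}$. For positive integers $t$ and $e$, a direct binomial expansion gives
$$\bigl((S^t - I)^e a\bigr)_j \;=\; \sum_{k=0}^{e} (-1)^{e-k}\binom{e}{k} a_{tk+j},$$
so the two hypotheses say (up to the overall signs $(-1)^m$ and $(-1)^l$) exactly that $(S^r - I)^m a = 0$ and $(S^s - I)^l a = 0$ hold as sequence identities for every starting index $j\geq 0$.

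Next, I would compute in $\mathbb{C}[x]$ the greatest common divisor of the two annihilator polynomials $(x^r - 1)^m$ and $(x^s - 1)^l$. Using the cyclotomic factorization $x^t - 1 = \prod_{d \mid t} \Phi_d(x)$ together with the pairwise coprimality of distinct cyclotomic polynomials, one obtains
$$\gcd\bigl((x^r-1)^m,\;(x^s-1)^l\bigr) \;=\; \prod_{d \mid \gcd(r,s)} \Phi_d(x)^{\min(m,l)} \;=\; (x^q - 1)^p.$$
Since $\mathbb{C}[x]$ is a PID, Bezout's identity then produces polynomials $u, v \in \mathbb{C}[x]$ with
$$(x^q - 1)^p \;=\; u(x)(x^r - 1)^m + v(x)(x^s - 1)^l.$$
Substituting $S$ for $x$ and applying both sides to $a$, the right-hand side vanishes by the hypotheses, whence $(S^q - I)^p a = 0$.

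Finally, evaluating this identity at $j = 0$ and rewriting through the same binomial expansion gives the claimed relation (with $p$ as the binomial order and $q$ as the step size in the subscript). The main technical step is the cyclotomic gcd computation; once one accepts the factorization $x^t-1=\prod_{d\mid t}\Phi_d(x)$ the rest is formal. A more elementary alternative would run a Euclidean-algorithm style induction on the pair $(r,s)$: assuming $r > s$, form integer linear combinations of the two given identities to derive an analogous pair of identities with $(r,s)$ replaced by $(r-s,\,s)$, iterate until the step sizes collapse to their common value $q$, and then reduce the resulting orders to $\min(m,l) = p$. I would nonetheless favor the shift-operator formulation, as the polynomial algebra makes both the structure of the hypothesis and the appearance of $q = \gcd(r,s)$ and $p = \min(m,l)$ transparent.
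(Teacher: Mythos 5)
The paper offers no proof of this lemma at all: it is imported verbatim as a citation of \cite[Lemma 3.15]{HYM}, so there is no internal argument to compare against. Your shift-operator proof is a correct and self-contained replacement. Each step checks out: the hypotheses are, up to the signs $(-1)^m$ and $(-1)^l$, exactly $(S^r-I)^m a=0$ and $(S^s-I)^l a=0$; since $x^r-1$ and $x^s-1$ have only simple roots, the common roots being precisely the $q$-th roots of unity, the gcd of $(x^r-1)^m$ and $(x^s-1)^l$ is indeed $(x^q-1)^p$ (the cyclotomic factorization is one way to see this, but simple-rootedness over $\mathbb{C}$ already suffices); and B\'ezout plus the evaluation homomorphism $x\mapsto S$ finishes the argument. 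This is arguably cleaner and more conceptual than the route usually taken in the literature (e.g.\ the arithmetic-progression/polynomial-sequence induction behind \cite{BMM}, which your ``Euclidean algorithm'' alternative sketches). One point you should make explicit rather than leave parenthetical: what you prove is
$$\sum_{0\leq k\leq p}(-1)^{k}\binom{p}{k}a_{qk+j}=0\quad(j\geq 0),$$
i.e.\ order $p=\min(m,l)$ and step $q=\gcd(r,s)$, whereas the lemma as printed has $p$ and $q$ interchanged in the conclusion. The printed version is false (take $r=s=2$, $m=l=1$ and a $2$-periodic non-constant sequence: then $a_0-2a_1+a_2\neq 0$), and it is your version that is actually needed in the proof of Theorem \ref{th24} to conclude that $T^q$ is a $(p,C)$-isometry; so you have silently corrected a typo in the statement, which is the right thing to do but deserves a sentence of justification.
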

In \cite{BMM} it was proved that if $T^r$ is an $m$-isometry and $T^s$ is an $l$-isometry, then $T^q$ is a $p$-isometry,  where $q$ is the greatest common divisor of
$r$ and $s$, and $p$ is the minimum of $m$ and $l$. In the following theorem we extend this result as follows
\begin{theorem}\label{th24}
 Let $T \in {\mathcal B}({\mathcal H} )$ such that $T^r$ is an $(m,C)$-isometry and $T^s$ is an $(l,C)$-isometry, then $T^q$ is a $(p,C)$-isometry, where $q$ is the greatest common divisor of
$r$ and $s$, and $p$ is the minimum of $m$ and $l$.
\end{theorem}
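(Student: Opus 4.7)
The plan is to reduce the operator identity to a scalar identity in a sesquilinear sequence and then invoke Lemma~\ref{lem22}. Fixing arbitrary $x, y \in \mathcal{H}$, the relevant sequence is
$$a_j := \langle C T^j C x,\, T^j y \rangle, \qquad j \geq 0.$$
Since $(m,C)$-isometry of $T^r$ is equivalent (after the reindexing $k \to m-k$) to $\sum_{k=0}^m (-1)^k \binom{m}{k} T^{*rk} C T^{rk} C = 0$, and analogously for $T^s$, the goal is to verify that $(a_j)_j$ meets the two hypotheses of Lemma~\ref{lem22} with parameters $(r,m)$ and $(s,l)$.

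The key step is producing the \emph{shifted} identities required by the lemma. Starting from $\sum_{k=0}^m (-1)^k \binom{m}{k} T^{*rk} C T^{rk} C = 0$, I would multiply on the left by $T^{*j}$ and on the right by $C T^j C$ for an arbitrary $j \geq 0$. The algebraic facts $C^2 = I$ (so that $CT^{rk}C \cdot CT^j C = CT^{rk+j}C$) and $T^{*j}T^{*rk} = T^{*(rk+j)}$ collapse the product into
$$\sum_{k=0}^m (-1)^k \binom{m}{k} T^{*(rk+j)} C T^{rk+j} C = 0,$$
and pairing with $x, y$ yields $\sum_{k=0}^m (-1)^k \binom{m}{k} a_{rk+j} = 0$ for all $j \geq 0$. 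The identical manipulation with $(s, l)$ in place of $(r, m)$ supplies the companion identity.

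Lemma~\ref{lem22} then delivers $\sum_{k=0}^p (-1)^k \binom{p}{k} a_{qk} = 0$ (with $q = \gcd(r,s)$ and $p = \min(m,l)$), valid for every $x, y \in \mathcal{H}$. Lifting back to operators, this reads
$$\sum_{k=0}^p (-1)^k \binom{p}{k} (T^q)^{*k} C (T^q)^k C = 0,$$
and a final reindexing $k \to p-k$ puts the identity in the defining form (1.4) witnessing that $T^q$ is a $(p,C)$-isometric operator.

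I expect the main obstacle to be conceptual rather than computational: one has to recognize that the role played by $\|T^j x\|^2$ in the classical proof for $m$-isometries is here taken by the sesquilinear quantity $\langle CT^j C x, T^j y \rangle$, and that $C^2 = I$ is the ingredient responsible for the $(m,C)$-isometric condition propagating through the left/right shift by $T^{*j}$ and $CT^j C$. Once this sesquilinear shift identity is in place, the combinatorial Lemma~\ref{lem22} finishes the proof mechanically, exactly as in the scalar $m$-isometric case.
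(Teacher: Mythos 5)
Your proof is correct and follows essentially the same route as the paper: reduce the operator identity to a scalar recursion for a sequence $(a_j)$ and invoke Lemma~\ref{lem22}. Your version is in fact slightly more careful than the paper's --- you work with the bilinear quantity $\langle CT^jCx \mid T^jy\rangle$ for arbitrary $x,y$ (so the operator identity is recovered directly, without polarization), you explicitly derive the shifted recursions by multiplying by $T^{*j}$ on the left and $CT^jC$ on the right using $C^2=I$, and you apply the lemma's conclusion in the form $\sum_{0\le k\le p}(-1)^k\binom{p}{k}a_{qk}=0$ actually needed to conclude that $T^q$ is a $(p,C)$-isometry.
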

\begin{proof}
\begin{eqnarray*}
T \;\text{is an}\; (m,C)-\text{isometry} &\Leftrightarrow& \sum_{0\leq k \leq m}(-1)^{m-k}\binom{m}{k}T^{*k}CT^kC=0\\&\Leftrightarrow& \sum_{0\leq k \leq m}(-1)^{m-k}\binom{m}{k}T^{*k}CT^k=0\\&\Leftrightarrow&
\sum_{0\leq k \leq m}(-1)^{m-k}\binom{m}{k}\langle CT^k\;|\;T^kx\rangle=0\;\;\; \forall\;x\in {\mathcal H}
\end{eqnarray*}
\par \vskip 0.2 cm \noindent
Fix $x \in {\mathcal H}$ and denote $a_j=\langle CT^jx\;|\;T^jx\rangle$ for $j=1,2,\cdots.$
 As $T^r$
is an $(m,C)$-isometric operator
the sequence $(a_j)_{j\geq 0}$ verifies the recursive equation $$\displaystyle\sum_{0\leq k\leq m}(-1)^{m-k}\binom{m}{k}a_{kr+j}=0, \;\text{for all}\; j\geq 0.$$ Analogously, as $T^s$
is an $(l,C)$-isometric operator
the sequence $(a_j)_{j\geq 0}$ verifies the recursive equation $$\displaystyle\sum_{0\leq k\leq l}(-1)^{l-k}\binom{l}{k}a_{kr+j}=0,\;\;\text{ for all}\;\; j\geq 0.$$ Applying  \label{lem22}similar  we obtain that
$$\displaystyle\sum_{0\leq k\leq q}(-1)^{q-k}\binom{q}{k}a_{kp}=0,$$  where $q$ is the greatest common
divisor of $r$ and $s$, and $p$ is the minimum of $m$ and $l$.
Finally $T^q$ is an $(p,C)$-isometric operator.
\end{proof}
\par\vskip 0.1 cm \noindent
The following corollary is
direct consequence of preceding theorem.

\begin{corollary}\label{cor22}
Let $T \in {\mathcal B}({\mathcal H})$ and let $r, s,m,l$ be
positive integers. The following properties hold. \par \vskip 0.2 cm \noindent $(1)$
If $T$ is an $(m,C)$-isometric operator  such that $T^s$ is an $(1,C)$-isometric

operator, then $T$ is an $(1,C)$-isometric
operator.\par \vskip 0.2 cm \noindent $(2)$
 If $T^r$ and $T^{r+1}$ are $(m,C)$-isometries, then so is $T$.\par \vskip 0.2 cm \noindent $(3)$
 If $T^r$ is an $(m,C)$-isometric operator and $T^{r+1}$ is an $(l,C)$-isometric operator

  with $m < l$, then $T$ is an $(m,C)$-isometric operator.
\end{corollary}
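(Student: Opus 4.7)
The plan is to obtain all three statements as immediate specializations of Theorem \ref{th24}, choosing the parameters $r, s, m, l$ so that the greatest common divisor of the exponents equals $1$. Once $q = \gcd(r,s) = 1$, the theorem's conclusion about $T^q$ becomes a statement about $T$ itself, which is exactly what each part asserts.

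For part (1), I would apply Theorem \ref{th24} with exponents $r = 1$ and $s$, viewing the hypothesis \textquotedblleft$T$ is an $(m,C)$-isometry\textquotedblright\ as the statement that $T^{1}$ is an $(m,C)$-isometry, and keeping the hypothesis that $T^{s}$ is a $(1,C)$-isometry. Then $q = \gcd(1,s) = 1$ and $p = \min(m,1) = 1$, so Theorem \ref{th24} yields that $T^{q} = T$ is a $(1,C)$-isometry.

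For parts (2) and (3), I would use the elementary fact $\gcd(r, r+1) = 1$ and feed the pair $(T^{r}, T^{r+1})$ into Theorem \ref{th24}. In part (2) both powers are $(m,C)$-isometries, so $p = \min(m,m) = m$, and since $q = 1$ we conclude that $T$ is an $(m,C)$-isometry. In part (3) the two powers are $(m,C)$- and $(l,C)$-isometric respectively with $m < l$, hence $p = \min(m,l) = m$ and again $q = 1$, so Theorem \ref{th24} gives that $T$ is an $(m,C)$-isometry.

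There is no genuine obstacle here; the result is a corollary in the strict sense, and the proof is essentially arithmetic. The only point deserving a little care is notational bookkeeping, because the letters $r, s, m, l$ appear both in the statement of the corollary and in Theorem \ref{th24} but play slightly different roles across the three parts, so one must state explicitly in each case which exponents and which isometry orders are being substituted into the theorem.
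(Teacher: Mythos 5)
Your proposal is correct and matches the paper's intent exactly: the paper offers no written proof, stating only that the corollary is a ``direct consequence of the preceding theorem,'' and your specializations of Theorem \ref{th24} (taking $q=\gcd=1$ in each case and identifying $p=\min$ appropriately) are precisely the omitted details.
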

\par\vskip 0.2 cm \noindent
\begin{theorem} \label{th25}Let $S$ and $T$ be in ${\mathcal B}({\mathcal H})$ and let $C=C_1\oplus C_2$ be a conjugation on ${\mathcal H}$where $C_1$ and $C_2$ are conjugation on $\overline{{\mathcal R}(S^n)}$ and ${\mathcal N}({S^{\ast}}^n)$, respectively. Assume that $T$ and $S$ are doubly commuting and  $T(CTC)=(CTC)T$, $T(CSC)=S(CTC)$ and $S^{\ast}CTC=CTCS^{\ast}$.
If $T$ is an $n_1$-quasi-$(k,C)$-isometric operator and $S$ is an $n_2$-quasi-$(m,C)$-isometric operator, then   $TS$ is a $n^\prime=\max\{\;n_1,n_2\;\}$-quasi-$(k+m-1,C)$-isometric operator.-
\end{theorem}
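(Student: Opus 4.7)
\emph{Plan.} Set $\widetilde{R}=CRC$ and $\Phi_\ell(R)=R^{*\ell}\widetilde{R}^\ell$, so that $\Lambda_n(R)=\sum_\ell(-1)^{n-\ell}\binom{n}{\ell}\Phi_\ell(R)$ is the $n$-th forward difference at $0$ of the operator-valued sequence $\ell\mapsto\Phi_\ell(R)$. The goal is to prove $(TS)^{*n'}\Lambda_{k+m-1}(TS)(TS)^{n'}=0$.

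The first step is the product identity $\Phi_\ell(TS)=\Phi_\ell(T)\Phi_\ell(S)$. From $TS=ST$ one gets $(TS)^\ell=T^\ell S^\ell$; the equality $\widetilde{T}\widetilde{S}=C(TS)C=C(ST)C=\widetilde{S}\widetilde{T}$ gives $C(TS)^\ell C=\widetilde{T}^\ell\widetilde{S}^\ell$; and the hypothesis $S^{*}\widetilde{T}=\widetilde{T}S^{*}$ permits sliding $S^{*\ell}$ past $\widetilde{T}^\ell$, delivering the factorization.

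Next, applying the non-commutative Leibniz formula $\Delta^n(ab)(0)=\sum_j\binom{n}{j}(\Delta^{n-j}a)(0)(\Delta^jb)(n-j)$ to $a_\ell=\Phi_\ell(T)$ and $b_\ell=\Phi_\ell(S)$, together with the elementary shift identity $\Phi_{p+\ell}(S)=S^{*p}\Phi_\ell(S)\widetilde{S}^p$, yields the decomposition
\[\Lambda_{k+m-1}(TS)=\sum_{j=0}^{k+m-1}\binom{k+m-1}{j}\Lambda_{k+m-1-j}(T)\,S^{*(k+m-1-j)}\Lambda_{j}(S)\widetilde{S}^{k+m-1-j}.\]
Splitting at $j=m$: for $j\le m-1$ we have $k+m-1-j\ge k$, and Lemma~\ref{lem21} (applicable because $T(CTC)=(CTC)T$) upgrades $T$ to an $n_1$-quasi-$(k+m-1-j,C)$-isometry, so $T^{*n'}\Lambda_{k+m-1-j}(T)T^{n'}=0$ whenever $n'\ge n_1$; for $j\ge m$ the parallel statement for $S$ provides $S^{*n'}\Lambda_{j}(S)S^{n'}=0$. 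Conjugating each summand by $(TS)^{*n'}=S^{*n'}T^{*n'}$ on the left and $(TS)^{n'}=T^{n'}S^{n'}$ on the right, one uses the doubly-commuting hypothesis together with $S^{*}\widetilde{T}=\widetilde{T}S^{*}$ (which implies $\Lambda_p(T)S^{*p}=S^{*p}\Lambda_p(T)$) and the twisted identity $T\widetilde{S}=S\widetilde{T}$ to rearrange factors so that each summand contains the vanishing central block $T^{*n'}\Lambda_{k+m-1-j}(T)T^{n'}$ (for $j\le m-1$) or $S^{*n'}\Lambda_{j}(S)S^{n'}$ (for $j\ge m$); the whole sum is then zero.

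\emph{Main obstacle.} The delicate point is the final rearrangement: since $T\widetilde{S}=S\widetilde{T}$ is only a twisted commutation, carrying $T^{n'}$ past an intervening $\widetilde{S}^p$ forces careful iteration of this identity (producing formulas such as $T^n\widetilde{S}=S\widetilde{T}T^{n-1}$) in concert with the direct commutations to arrange the central quasi-block correctly in each summand. A secondary technical task is verifying the $S$-analogue of Lemma~\ref{lem21}, namely that the quasi-$(m,C)$-isometric property of $S$ extends to higher orders, which must be derived from the commutation hypotheses available.
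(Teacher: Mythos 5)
Your outline follows the same route as the paper: the paper invokes Gu's product formula (\cite[Lemma 12]{CG3}) for $\Lambda_{k+m-1}(TS)$ --- which is exactly what your Leibniz-rule computation re-derives, just with the shift placed on the $S$-factor instead of the $T$-factor --- and then splits the sum at $j=m$, killing the $j\le m-1$ terms via Lemma \ref{lem21} applied to $T$ and the $j\ge m$ terms via the corresponding vanishing for $S$. Your two flagged obstacles can both be discharged, but not quite in the way you suggest. First, no ``$S$-analogue of Lemma \ref{lem21}'' needs to be derived from commutation hypotheses: the statement $S^{*n'}\Lambda_j(S)S^{n'}=0$ for $j\ge m$ and $n'\ge n_2$ is exactly Proposition \ref{pro21}, which rests on the block-matrix decomposition relative to $\overline{\mathcal R(S^{n_2})}$ (this is why the theorem insists that $C=C_1\oplus C_2$ be adapted to that decomposition) and requires no commutation of $S$ with $CSC$.

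Second, the ``main obstacle'' you identify --- iterating the twisted identity $T\widetilde S=S\widetilde T$ to carry $T^{n'}$ past $\widetilde S^{\,p}$ --- is a genuine gap as you have left it: each application of $T\widetilde S=S\widetilde T$ trades a power of $T$ for a power of $S$ (under favorable extra commutations one gets things like $T^{n'}\widetilde S^{\,p}=S^{p}\widetilde T^{\,p}T^{n'-p}$), so the surviving factors of $T^{n'}$ and $S^{n'}$ no longer land adjacent to $\Lambda_{k+m-1-j}(T)$ and $\Lambda_j(S)$ respectively, and the central vanishing blocks do not assemble. The resolution is that the intended hypothesis is the untwisted commutation $T(CSC)=(CSC)T$: that is what the paper's own proof actually uses (it writes $[T,CSC]=0$) and what the companion statements Corollary \ref{cor23} and Proposition \ref{pro25} record; the displayed hypothesis $T(CSC)=S(CTC)$ appears to be a misprint. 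With $[T,S]=[T,S^*]=[T,CSC]=[CTC,S^*]=0$ in hand, every factor you need to move commutes with everything in its way, the rearrangement becomes routine, and your argument closes.
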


\begin{proof}
Since $TS=ST$, $T(CSC)=S(CTC)$ and $S^{\ast}CTC=CTCS^{\ast}$, it follows that
$$[T^*, S^*]=[CTC,CSC]=[CTC,S^*]=0.$$  By taking into account  \cite[Lemma 12]{CG3} we obtain that
\begin{eqnarray*}
 \Lambda_{k+m-1}(TS)
=  \sum_{0\leq j \leq k+m-1} \binom{k+m-1}{j}  \, {T^{\ast}}^j T^{*{n^\prime}}\Lambda_{k+m-1-j}(T) T^{n^\prime}\,CT^jC  S^{*{n^\prime}}\Lambda_j(S) S^{n^\prime} .
\end{eqnarray*}
\noindent
Furthermore as $[T, S^*]=[T,CSC]=[CTC,S^*]=0$ we get
\begin{eqnarray*}
&&(TS)^{*n^{\prime}} \, \Lambda_{k+m-1}(TS) \, (TS)^{n^\prime} \cr
&=& \sum_{0\leq j \leq k+m-1} \binom{k+m-1}{j}  \, {T^{\ast}}^j T^{*{n^\prime}}\Lambda_{k+m-1-j}(T) T^{n^\prime}\,CT^jC  S^{*{n^\prime}}\Lambda_j(S) S^{n^\prime} .
\end{eqnarray*}
  Under the assumption that $S$ is an $n_2$-quasi-$(k,C)$-isometric operator, we get in view of Proposition \ref{pro21}    $S^{*{n^\prime}}\Lambda_j(S)S^{n^\prime}=0$  for $j\geq m$ and $n^\prime \geq n_2$. On the other hand, if $j\leq m-1$, then $k+m-1-j\geq k+m-1-m+1=k$ and so $  T^{*^{n^\prime}}\Lambda_{k+m-1-j}(T)T^{n^\prime}=0$ by Lemma \ref{lem21}.
Hence, $TS$ is a $n^\prime$-quasi-$(k+m-1,C)$-isometric operator.
\end{proof}
\par \vskip 0.2 cm \noindent
\par\vskip 0.2 cm \noindent
\begin{corollary} \label{cor23}Let $S$ and $T$ be in ${\mathcal B}({\mathcal H})$  are doubly commuting.  Let $C=C_1\oplus C_2$ be a conjugation on ${\mathcal H}$ where $C_1$ and $C_2$ are conjugation on $\overline{{\mathcal R}(S^n)}$ and ${\mathcal N}({S^{\ast}}^n)$, respectively. Assume that $T(CSC)=(CSC)T$, $T(CTC)=(CTC)T$ and $S^{\ast}CTC=CTCS^{\ast}$.
If $T$ is an $n_1$-quasi-$(k,C)$-isometric operator and $S$ is an $n_2$-quasi-$(m,C)$-isometric operator, then   $TS^q$ is a $n^\prime=\max\{n_1,n_2\}$-quasi-$(k+m-1,C)$-isometric operator for some positive integer $q$.
\end{corollary}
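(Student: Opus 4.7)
The plan is to reduce this corollary to a direct application of Theorem \ref{th25} with the operator $S$ replaced by $S^q$ for a suitably chosen positive integer $q$. The structural framework is already in place: powers of $n$-quasi-$(m,C)$-isometric operators retain the same class membership, so the bulk of the proof consists of verifying that the four commutation hypotheses of Theorem \ref{th25} can be arranged for the pair $(T, S^q)$.

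First, I would recall from Theorem \ref{th22} that $S^q$ is itself an $n_2$-quasi-$(m,C)$-isometric operator for every positive integer $q$, so the operator-theoretic class of $S^q$ is automatic. The doubly commuting of $T$ with $S^q$ is immediate from that of $T$ with $S$, and the condition $T(CTC)=(CTC)T$ is given and untouched by the substitution. Using the identity $CS^qC = (CSC)^q$, which follows from $C^2 = I$, the hypothesis $S^{\ast}CTC = CTCS^{\ast}$ yields $(S^q)^{\ast} CTC = CTC (S^q)^{\ast}$ by a straightforward induction on $q$, since at each step one slides an additional $S^{\ast}$ past $CTC$.

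The remaining hypothesis of Theorem \ref{th25} that must be verified is the intertwining relation $T(CS^qC) = S^q(CTC)$. Here the given commutation $T(CSC)=(CSC)T$ together with $CS^qC = (CSC)^q$ immediately gives $T(CS^qC) = (CS^qC)T$, so the task reduces to choosing $q$ so that $(CS^qC)T = S^q(CTC)$. I expect this equality to be the main obstacle of the proof: one must exploit the combined commutation relations (namely $S^{\ast}CTC = CTCS^{\ast}$, together with the doubly commuting of $T$ and $S$, and the two commutations $T(CSC)=(CSC)T$ and $T(CTC)=(CTC)T$) to produce such a $q$. Once this $q$ is secured, Theorem \ref{th25} applied to the pair $(T, S^q)$ yields that $TS^q$ is an $n'$-quasi-$(k+m-1,C)$-isometric operator with $n' = \max\{n_1, n_2\}$, which is precisely the conclusion of the corollary.
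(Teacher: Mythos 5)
Your overall strategy is exactly the one the paper uses: apply Theorem \ref{th22} to conclude that $S^q$ is again an $n_2$-quasi-$(m,C)$-isometric operator, check that the pair $(T,S^q)$ inherits the commutation hypotheses, and then invoke the product theorem (Theorem \ref{th25}; the paper's reference to Theorem \ref{th24} at that point is evidently a typo). Your verifications of the double commutativity of $T$ and $S^q$, of $T(CTC)=(CTC)T$, and of $(S^q)^{*}CTC=CTC(S^q)^{*}$ via $CS^qC=(CSC)^q$ are all fine.

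The proposal is nevertheless incomplete, because the one hypothesis of Theorem \ref{th25} that carries the weight of the argument --- the intertwining relation $T(CS^qC)=S^q(CTC)$ --- is never established. You correctly observe that the corollary's hypothesis $T(CSC)=(CSC)T$ only gives the commutation $T(CS^qC)=(CS^qC)T$, and that one would still need $(CS^qC)T=S^q(CTC)$ for some $q$; but you then write that the argument concludes ``once this $q$ is secured,'' without securing it. This is a genuine gap, not a routine verification: the required identity does not follow from the stated hypotheses. For instance, with $S=I$ (which is a $(1,C)$-isometry and satisfies every commutation assumption in the corollary) the identity $T(CS^qC)=S^q(CTC)$ reduces to $T=CTC$, which is not implied by $T(CTC)=(CTC)T$ and the other hypotheses. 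So no choice of $q$ can be extracted from the given data, and the reduction to Theorem \ref{th25} breaks down at precisely the point you flagged. To be fair, the paper's own proof has the same defect --- it simply asserts that $T$ and $S^q$ ``satisfy the conditions'' of the product theorem without checking the intertwining condition --- and the natural repair is to route the argument through Proposition \ref{pro25} instead, whose hypotheses are pure commutation relations of the type you actually have; but that requires the additional assumption $S(CSC)=(CSC)S$, which the corollary as stated does not include. As written, your proof (like the paper's) does not close.
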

\begin{proof}
In view of Theorem \ref{th22} we have that $S^q$ is an $n_2$-quasi-$(m,C)$-isometric operator. Moreover $T$ and $S^q$  satisfy the conditions of Theorem \ref{th24}. Hence $TS^q$ is a $n^\prime$-quasi-$(k+m-1,C)$-isometric operator.
\end{proof}
\par \vskip 0.2 cm \noindent
\begin{proposition}\label{pro25}
Let $S$ and $T$ be in ${\mathcal B}({\mathcal H})$  are doubly commuting. Assume that  $T(CSC)=(CSC)T$, $T(CTC)=(CTC)T$, $S^{\ast}CTC=CTCS^{\ast}$ and
$S(CSC)=(CSC)S.$
If $T$ is an $n_1$-quasi-$(k,C)$-isometric operator and $S$ is an $n_2$-quasi-$(m,C)$-isometric operator, then   $TS$ is a $n^\prime=\max\{n_1,n_2\}$-quasi-$(k+m-1,C)$-isometric operator.
\end{proposition}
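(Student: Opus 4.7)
The plan is to follow the proof of Theorem \ref{th25} essentially verbatim, checking that the modified commutation hypotheses of Proposition \ref{pro25} -- the mixed relation $T(CSC)=S(CTC)$ is replaced by the pair $T(CSC)=(CSC)T$, $S(CSC)=(CSC)S$ -- still supply what is needed. Set $n'=\max\{n_{1},n_{2}\}$, $\widetilde{T}=CTC$, $\widetilde{S}=CSC$. First I would establish the expansion
\begin{equation*}
\Lambda_{k+m-1}(TS)=\sum_{j=0}^{k+m-1}\binom{k+m-1}{j}T^{*j}\Lambda_{k+m-1-j}(T)\,\widetilde{T}^{j}\,\Lambda_{j}(S),
\end{equation*}
via \cite[Lemma~12]{CG3} as in Theorem \ref{th25}; the commutations it requires are $TS=ST$, $[T^{*},S^{*}]=0$, $S^{*}\widetilde{T}=\widetilde{T}S^{*}$, and $\widetilde{T}\widetilde{S}=\widetilde{S}\widetilde{T}$ (the last following from $TS=ST$ together with $C^{2}=I$), all of which are in the hypothesis.

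Next I would sandwich the identity with $(TS)^{*n'}=S^{*n'}T^{*n'}$ on the left and $(TS)^{n'}=T^{n'}S^{n'}$ on the right and reorganize each summand. The pair $[T^{*},S^{*}]=0$ and $S^{*}\widetilde{T}=\widetilde{T}S^{*}$ lets $S^{*n'}$ be carried rightward past $T^{*n'+j}\Lambda_{k+m-1-j}(T)\widetilde{T}^{j}$ (a polynomial in $T^{*}$ and $\widetilde{T}$). The pair $[T,S^{*}]=0$ and $T\widetilde{S}=\widetilde{S}T$ lets $T^{n'}$ be carried leftward past $\Lambda_{j}(S)$ (a polynomial in $S^{*}$ and $\widetilde{S}$), and $T\widetilde{T}=\widetilde{T}T$ lets it slide across $\widetilde{T}^{j}$. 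After these commutations each summand takes the form
\begin{equation*}
T^{*j}\bigl(T^{*n'}\Lambda_{k+m-1-j}(T)T^{n'}\bigr)\widetilde{T}^{j}\bigl(S^{*n'}\Lambda_{j}(S)S^{n'}\bigr)T^{n'},
\end{equation*}
with the two central blocks isolated.

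Finally, since $n'\geq n_{1},n_{2}$, Proposition \ref{pro21} upgrades $T$ to an $n'$-quasi-$(k,C)$-isometric operator and $S$ to an $n'$-quasi-$(m,C)$-isometric operator. The hypotheses $T\widetilde{T}=\widetilde{T}T$ and $S\widetilde{S}=\widetilde{S}S$ then allow Lemma \ref{lem21} to push these further, so that $T^{*n'}\Lambda_{k'}(T)T^{n'}=0$ for every $k'\geq k$ and $S^{*n'}\Lambda_{m'}(S)S^{n'}=0$ for every $m'\geq m$. A split on the index $j$ finishes: for $j\geq m$ the $S$-block vanishes, while for $j\leq m-1$ one has $k+m-1-j\geq k$ so the $T$-block vanishes. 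Thus every summand is zero and $(TS)^{*n'}\Lambda_{k+m-1}(TS)(TS)^{n'}=0$.

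The main technical burden is the rearrangement step: one must verify that the precise list of commutations furnished by the hypotheses of Proposition \ref{pro25} is enough to push $S^{*n'}$ past the $T$-polynomial and $T^{n'}$ past the $S$-polynomial without ever invoking a relation (such as $S\widetilde{T}=\widetilde{T}S$ or $T^{*}\widetilde{T}=\widetilde{T}T^{*}$) that is not assumed. Once this bookkeeping is in place the computation is routine, and all remaining ingredients (the expansion formula, Proposition \ref{pro21}, Lemma \ref{lem21}) transfer verbatim from the argument for Theorem \ref{th25}.
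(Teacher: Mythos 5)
Your proposal is correct and matches the paper's argument, which likewise reduces Proposition \ref{pro25} to "repeating the reasoning given in the proof of Theorem \ref{th25}" after using Lemma \ref{lem21} to exploit $T(CTC)=(CTC)T$ and $S(CSC)=(CSC)S$; your careful verification that the commutation relations $[T,S]=[T,S^{*}]=[S^{*},CTC]=[T,CSC]=[T,CTC]=0$ suffice for the rearrangement is exactly the bookkeeping the paper leaves implicit. The only blemish is the stray trailing factor $T^{n'}$ in your displayed form of each summand (the single $T^{n'}$ coming from $(TS)^{n'}=T^{n'}S^{n'}$ is already absorbed into the central $T$-block), but this does not affect the conclusion.
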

\begin{proof}
Under the assumptions that $T(CTC)=(CTC)T$ and $S(CSC)=(CSC)S$, it follows form Lemma \ref{lem21} that $T$ is an $n_1$-quasi-$(k+1,C)$-isometric operator and $S$ is an $n_2$-quasi-$(m+1,C)$-isometric operator. By  repeating the reasoning given  in the proof of Theorem \ref{th25} we check that
$$\big(TS\big)^{*n^{\prime}}\Lambda_{m+k-1}(TS)\big(TS\big)^{n^\prime}=0.$$ Therefore $TS$ is a  $n^\prime$-quasi-$(m+k-1)$-isometric operator.
\end{proof}
\vspace{4mm}
Let ${\mathcal H} \overline{\otimes} {\mathcal H}$ denote the completion,
endowed with a reasonable uniform cross-norm, of the algebraic tensor product ${\mathcal H} \otimes {\mathcal H}$
of ${\mathcal H}$ and ${\mathcal H}$. It is well known that if $x\in {\mathcal H} \overline{\otimes} {\mathcal H}$, there exists linearly independent sets $(u_1)_{i\in I}$ and
$(v_i)_{i\in I}$  such that $x=\displaystyle\sum_{i\in I}u_i\otimes v_i.$
An inner product on ${\mathcal H} \overline{\otimes} {\mathcal H}$ is defines as
$$\langle x\otimes y\;|\;u\otimes v\rangle:=\langle x\;|\;u\rangle\langle y\;|\;v\rangle\;\;\text{where}\;\;x,y, u, v \in {\mathcal H}.$$
We construct an operator $\widetilde{T}$ on the tensor product of Hilbert spaces. Let $T$ be an operator on ${\mathcal H}$ and $S$ be an operator
on ${\mathcal H}$. We define $$\widetilde{T}:=T\otimes S:{\mathcal H} \overline{\otimes} {\mathcal H}\longrightarrow {\mathcal H} \overline{\otimes} {\mathcal H}\;\;\text{by}$$
$$\widetilde{T}(x)=\big(T\otimes S\big)\bigg( \sum_{i\in I}u_i\otimes v_i \bigg)=\sum_{i\in I}T(u_i)\otimes S(v_i).$$
\par \vskip 0.2 cm \noindent
In \cite[Lemma 4.5]{CLH}, it was proved that if $C$ and $D$ be conjugations on ${\mathcal H}$. Then $C \otimes D$ is a conjugation on  ${\mathcal H}\overline{\otimes}  {\mathcal H}$.\par \vskip 0.1 cm \noindent

\begin{lemma}\label{lem23}
If $T\in {\mathcal B}({\mathcal H})$ and let $C$ and $ D$
are conjugations on $\mathcal{H}$ respectively. Then $T$ is an $n$-quasi-$(m,C)$-isometric operator  if and only if  then the tensor product $T\otimes I$
$(\text{resp.} I\otimes T )$ is  an $n$-quasi-$(m,C\otimes D)$-isometric operator.
\end{lemma}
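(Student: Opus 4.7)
The plan is to reduce the defining identity for the $n$-quasi-$(m,C\otimes D)$-isometry condition on $T\otimes I$ to a tensor of the defining identity for $T$ with the identity on the second factor. Since $A\otimes I=0$ on ${\mathcal H}\overline{\otimes}{\mathcal H}$ if and only if $A=0$ on ${\mathcal H}$, the equivalence will follow at once.

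The basic arithmetic I would record first is this. One has $(T\otimes I)^{k}=T^{k}\otimes I$ and $(T\otimes I)^{\ast k}=T^{\ast k}\otimes I$ for every $k\geq 0$. Next, the conjugation $C\otimes D$ on ${\mathcal H}\overline{\otimes}{\mathcal H}$ (which is a conjugation by the cited \cite[Lemma 4.5]{CLH}) satisfies $(C\otimes D)(A\otimes B)(C\otimes D)=CAC\otimes DBD$ on simple tensors, hence on the whole space by linearity and continuity. Combining these facts with $D^{2}=I$ yields
\begin{equation*}
(C\otimes D)(T\otimes I)^{m-k}(C\otimes D)=CT^{m-k}C\otimes I.
\end{equation*}

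I would then substitute into the quasi-$(m,C\otimes D)$-isometry expression for $T\otimes I$,
\begin{equation*}
(T\otimes I)^{\ast n}\Bigl(\sum_{0\leq k\leq m}(-1)^{k}\binom{m}{k}(T\otimes I)^{\ast m-k}(C\otimes D)(T\otimes I)^{m-k}(C\otimes D)\Bigr)(T\otimes I)^{n}.
\end{equation*}
Since the scalar coefficients $(-1)^{k}\binom{m}{k}$ are real, they pass through the tensor without any sign twist, and the whole expression collapses to
\begin{equation*}
\Bigl(T^{\ast n}\sum_{0\leq k\leq m}(-1)^{k}\binom{m}{k}T^{\ast m-k}CT^{m-k}C\,T^{n}\Bigr)\otimes I,
\end{equation*}
which vanishes on ${\mathcal H}\overline{\otimes}{\mathcal H}$ exactly when $T$ is $n$-quasi-$(m,C)$-isometric on ${\mathcal H}$. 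The case $I\otimes T$ is handled by the symmetric computation, interchanging the two tensor factors and invoking $C^{2}=I$ in place of $D^{2}=I$.

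I do not expect a genuine obstacle; the proof is bookkeeping in the tensor algebra. The only point warranting mild care is that $C\otimes D$ is \emph{conjugate}-linear, so the factorization $(C\otimes D)(A\otimes B)(C\otimes D)=CAC\otimes DBD$ should be verified first on simple tensors before being slipped inside the binomial sum; but because the binomial coefficients are real, no sign artifact appears and the collapse above is valid.
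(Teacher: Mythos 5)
Your proposal is correct and follows essentially the same route as the paper: both reduce the defining expression for $T\otimes I$ to $\bigl(T^{*n}\sum_{0\leq k\leq m}(-1)^{k}\binom{m}{k}T^{*m-k}CT^{m-k}C\,T^{n}\bigr)\otimes I$ and conclude from the fact that $A\otimes I=0$ iff $A=0$. Your explicit verification of $(C\otimes D)(A\otimes B)(C\otimes D)=CAC\otimes DBD$ on simple tensors is a welcome bit of care that the paper's ``straightforward computation'' leaves implicit.
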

\begin{proof}
A straightforward computation gives
\begin{eqnarray*}&&\big(T\otimes I\big)^{*n}\bigg(\sum_{0\leq k\leq m}(-1)^{m-k}\binom{m}{k}\big(T\otimes I\big)^{*k}\big(C\otimes D\big)\big(T\otimes I\big)^k\big(C\otimes D\big)\bigg)\big(T\otimes I\big)^n\\&=&T^{*n}\bigg(\sum_{0\leq k\leq m}(-1)^{m-k}\binom{m}{k}T^{*k}CT^kC\bigg)\otimes I.\end{eqnarray*}
From this we can  get that $T$ is an $n$-quasi-$(m,C)$-isometric operator if and only if  $T\otimes I$ is an $n$-quasi-$(m,C\otimes D)$-isometric operator..
\end{proof}

\par\vskip 0.1 cm \noindent

As  application of Lemma \ref{lem23} and Proposition \ref{pro25}, we get the following theorem.
\begin{theorem}\label{th26}
Let $T$ and $ S\in \mathcal{B}(\mathcal{H})$ such that  $T$ is an $n_1$-quasi-$(m,C)$-isometric operator  and $S$ is  an $n_2$-$(k,D)$-isometric operator where $C$ and $ D$
are conjugations on $\mathcal{H}$, respectively. If $T(CTC)=(CTC)T$ and  and $S(DSD)=(DSD)$, then $T\otimes S$ is an $n^\prime=\max\{n_1,n_2\}$-quasi-$(m+k-1,C\otimes D)$-isometric operator.
\end{theorem}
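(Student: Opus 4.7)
The plan is to factor $T\otimes S$ as $(T\otimes I)(I\otimes S)$ on $\mathcal{H}\overline{\otimes}\mathcal{H}$, upgrade the two factors to $n$-quasi-isometries with respect to the conjugation $C\otimes D$, and then invoke Proposition~\ref{pro25}. Writing $\widetilde{T}:=T\otimes I$, $\widetilde{S}:=I\otimes S$, and $\widetilde{C}:=C\otimes D$, the first move is to apply Lemma~\ref{lem23} (in both its ``left slot'' and ``right slot'' forms) to conclude that $\widetilde{T}$ is $n_1$-quasi-$(m,\widetilde{C})$-isometric and $\widetilde{S}$ is $n_2$-quasi-$(k,\widetilde{C})$-isometric.

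The core of the argument will be verifying the commutation hypotheses of Proposition~\ref{pro25} for the pair $(\widetilde{T},\widetilde{S})$ with conjugation $\widetilde{C}$. I would begin by recording the tensor identities
$$\widetilde{C}\widetilde{T}\widetilde{C}=CTC\otimes I,\qquad \widetilde{C}\widetilde{S}\widetilde{C}=I\otimes DSD,$$
which follow from $C^2=D^2=I$. Because $\widetilde{T}$ and $\widetilde{S}$ act on disjoint tensor slots, the double commutativity $\widetilde{T}\widetilde{S}=\widetilde{S}\widetilde{T}$ and $\widetilde{T}\widetilde{S}^{*}=\widetilde{S}^{*}\widetilde{T}$ is automatic, and the mixed conditions $\widetilde{T}(\widetilde{C}\widetilde{S}\widetilde{C})=(\widetilde{C}\widetilde{S}\widetilde{C})\widetilde{T}$ and $\widetilde{S}^{*}\widetilde{C}\widetilde{T}\widetilde{C}=\widetilde{C}\widetilde{T}\widetilde{C}\widetilde{S}^{*}$ are likewise trivial. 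The only two conditions that actually use the original hypotheses are $\widetilde{T}(\widetilde{C}\widetilde{T}\widetilde{C})=(\widetilde{C}\widetilde{T}\widetilde{C})\widetilde{T}$ and $\widetilde{S}(\widetilde{C}\widetilde{S}\widetilde{C})=(\widetilde{C}\widetilde{S}\widetilde{C})\widetilde{S}$; by the identities above these reduce exactly to $T(CTC)=(CTC)T$ and $S(DSD)=(DSD)S$.

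Once these hypotheses are checked, Proposition~\ref{pro25} applies directly with parameters $(n_1,m)$ for $\widetilde{T}$ and $(n_2,k)$ for $\widetilde{S}$, and delivers the conclusion that $\widetilde{T}\widetilde{S}=T\otimes S$ is an $n'$-quasi-$(m+k-1,\widetilde{C})$-isometric operator with $n'=\max\{n_1,n_2\}$. I do not expect a substantive obstacle here: the entire argument is a packaging of Lemma~\ref{lem23} with Proposition~\ref{pro25}, and every step reduces to elementary tensor arithmetic. The one point that must be handled with minor care is the ``resp.'' clause in Lemma~\ref{lem23}, which is what allows us to feed $S$'s hypothesis (stated with conjugation $D$) into the right-hand slot of the tensor conjugation $C\otimes D$.
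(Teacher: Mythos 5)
Your proposal is correct and follows essentially the same route as the paper: factor $T\otimes S=(T\otimes I)(I\otimes S)$, use Lemma~\ref{lem23} to transfer the quasi-isometry property to each tensor factor with the conjugation $C\otimes D$, and then apply Proposition~\ref{pro25}. Your explicit verification of the commutation hypotheses (via $\widetilde{C}\widetilde{T}\widetilde{C}=CTC\otimes I$ and $\widetilde{C}\widetilde{S}\widetilde{C}=I\otimes DSD$) is in fact more detailed than the paper, which simply asserts that the conditions of Proposition~\ref{pro25} hold.
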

\begin{proof}
It is well known that  $T\otimes S=\big(T\otimes I\big)\big(I\otimes S\big)=\big(I\otimes S\big)\big(T\otimes I\big)$. In view of Lemma \ref{lem23} we have that
$T\otimes I$ is an $n_1$-quasi-$(m,C\otimes D)$-isometric operator and  $I\otimes S$ is an $n_2$-quasi-$(k,C\otimes D)$-isometric operator.
On the other hand, note  that $T\otimes I$ and $I\otimes S$ satisfy all conditions in
Proposition \ref{pro25}.  We conclude that $(T\otimes I)(I\otimes S)$ is a $n^\prime$-quasi-$(m+k-1,C\otimes D)$-isometric operator.
Thus we arrive at the desired
conclusion.
It needs to m
\end{proof}

\par\vskip 0.2 cm \noindent
\begin{lemma}\label{lem24}
Let $T,Q\in {\mathcal B}({\mathcal H})$ such that $TQ=QT$, then for $m\geq 2$
$$\Lambda_{m}(T+Q)=\sum_{i+j+k=m}\binom{m}{i,j,k}\big(T+Q\big)^{*i}Q^{*j}\Lambda_k(T)CT^jC CQ^iC$$ where
$\displaystyle\binom{m}{i,j,k}=\frac{m!}{i!\;j!\;k!}.$
\end{lemma}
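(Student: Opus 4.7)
The plan is to proceed by induction on $m$, using the scalar recursion $\Lambda_{k+1}(T)=T^{*}\Lambda_k(T)(CTC)-\Lambda_k(T)$ recalled in the proof of Lemma \ref{lem21}. The base case $m=0$ is immediate: $\Lambda_0(T+Q)=I=\Lambda_0(T)$, matching the unique triple $(0,0,0)$ in the sum. Set
\[
F(i,j,k) := (T+Q)^{*i}\,Q^{*j}\,\Lambda_k(T)\,CT^jC\,CQ^iC
\]
and assume $\Lambda_m(T+Q)=\sum_{i+j+k=m}\binom{m}{i,j,k}F(i,j,k)$.

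Applying the scalar recursion to $T+Q$ gives
\[
\Lambda_{m+1}(T+Q)=(T+Q)^{*}\Lambda_m(T+Q)\,CTC+(T+Q)^{*}\Lambda_m(T+Q)\,CQC-\Lambda_m(T+Q),
\]
and I would process each piece summand by summand. Using $C^2=I$ and $TQ=QT$, the conjugated tails satisfy $CT^jC\,CQ^iC\,CTC=CT^{j+1}C\,CQ^iC$ and $CT^jC\,CQ^iC\,CQC=CT^jC\,CQ^{i+1}C$. Hence the $CQC$-piece applied to $F(i,j,k)$ absorbs the extra $(T+Q)^{*}$ into $(T+Q)^{*i}$ and produces $F(i+1,j,k)$. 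For the $CTC$-piece, split $(T+Q)^{*(i+1)}Q^{*j}=T^{*}(T+Q)^{*i}Q^{*j}+(T+Q)^{*i}Q^{*(j+1)}$ using $T^{*}Q^{*}=Q^{*}T^{*}$: the $Q^{*}$-half becomes $F(i,j+1,k)$ directly, and the $T^{*}$-half is rewritten via $T^{*}\Lambda_k(T)\,CT^{j+1}C=\bigl(T^{*}\Lambda_k(T)\,CTC\bigr)CT^jC=\bigl(\Lambda_{k+1}(T)+\Lambda_k(T)\bigr)CT^jC$ by the scalar recursion, producing $F(i,j,k+1)+F(i,j,k)$.

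Assembling the three contributions and cancelling the $\pm F(i,j,k)$ terms against $-\Lambda_m(T+Q)$, each summand $\binom{m}{i,j,k}F(i,j,k)$ of $\Lambda_m(T+Q)$ gives rise to $\binom{m}{i,j,k}\bigl[F(i+1,j,k)+F(i,j+1,k)+F(i,j,k+1)\bigr]$ in $\Lambda_{m+1}(T+Q)$. Re-indexing so that a triple $(I,J,K)$ with $I+J+K=m+1$ labels a summand of $\Lambda_{m+1}(T+Q)$, the coefficient of $F(I,J,K)$ becomes
\[
\binom{m}{I-1,J,K}+\binom{m}{I,J-1,K}+\binom{m}{I,J,K-1}=\binom{m+1}{I,J,K},
\]
the trinomial Pascal identity, which closes the induction. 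The only conceptual obstacle is that $T^{*}$ does \emph{not} commute with $\Lambda_k(T)$ in general, since $T^{*}$ and $CTC$ need not commute; the workaround is that the factor $T^{*}\Lambda_k(T)$ always appears immediately followed by a $CTC$-factor, so the scalar recursion can be applied in the form $T^{*}\Lambda_k(T)\,CTC=\Lambda_{k+1}(T)+\Lambda_k(T)$. The remainder is multinomial bookkeeping.
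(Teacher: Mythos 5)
Your proof is correct. Note that the paper does not actually prove this lemma---its ``proof'' is a one-line deferral to \cite[Lemma 2]{OCL1}---so your argument is a genuine, self-contained substitute rather than a paraphrase of anything in the text. The steps all check out: the recursion $\Lambda_{k+1}(T)=T^{*}\Lambda_{k}(T)(CTC)-\Lambda_{k}(T)$ is an identity valid for every operator (it is Pascal's rule applied termwise after cancelling $C^{2}=I$), so applying it to $T+Q$ is legitimate; the tail manipulations $CT^{j}C\,CQ^{i}C\,CTC=CT^{j+1}C\,CQ^{i}C$ and $CQ^{i}C\,CQC=CQ^{i+1}C$, and the transport of $T^{*}$ through $(T+Q)^{*i}Q^{*j}$, use only the hypothesis $TQ=QT$; and you correctly isolate the one delicate point, namely that $T^{*}$ cannot be commuted past $\Lambda_{k}(T)$ and must instead be absorbed together with the adjacent $CTC$ via $T^{*}\Lambda_{k}(T)\,CTC=\Lambda_{k+1}(T)+\Lambda_{k}(T)$. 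The resulting bookkeeping $F(i,j,k)\mapsto F(i+1,j,k)+F(i,j+1,k)+F(i,j,k+1)$ combined with the trinomial Pascal identity (with the convention that a multinomial coefficient with a negative entry vanishes) closes the induction, and the base case $\Lambda_{0}(T+Q)=I=F(0,0,0)$ is as you state, so the formula in fact holds for all $m\geq 0$, not just $m\geq 2$. This induction is the standard route for such commuting-perturbation expansions and is very likely what the cited lemma does in its own setting, so you have not so much diverged from the paper as supplied the proof it omits.
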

\begin{proof} The proof follows by similar arguments as in the proof of
 \cite[Lemma 2]{OCL1}.
\end{proof}
\par \vskip 0.3 cm \noindent
It was proved in \cite[Thoerem 3.1]{TMVN} that if $T \in{\mathcal B}({\mathcal H})$ is an $m$-isometry and $Q \in{\mathcal B}({\mathcal H})$ is an
nilpotent operator of order $p$ such that $TQ = QT$, then $T +Q$-is an $(m+2p-2)$-isometry. In the following theorem we show that this
remains true for $(m,C)$-isometric operators.
\par \vskip 0.3 cm \noindent

\begin{theorem}\label{th27}
Let $ T, Q \in {\mathcal B}({\mathcal H})$. Assume $T$ commutes with $Q$. If $T$  is an $(m, C)$-isometric operator and $Q$
is a nilpotent operator of order $p$. Then $T+Q$ is an $(m +2p-2,C)$-isometric operator where $C$ is a conjugation on ${\mathcal H}$.
\end{theorem}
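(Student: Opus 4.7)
The plan is to apply Lemma \ref{lem24} with $m$ replaced by $m+2p-2$, which yields
$$\Lambda_{m+2p-2}(T+Q)=\sum_{i+j+k=m+2p-2}\binom{m+2p-2}{i,j,k}(T+Q)^{*i}Q^{*j}\Lambda_k(T)CT^jC\cdot CQ^iC,$$
and then to argue that every single term in this multinomial sum is zero.

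The first reduction is combinatorial, exploiting the nilpotency of $Q$. Since $Q^p=0$, any term with $j\geq p$ is killed by the factor $Q^{*j}=0$, and any term with $i\geq p$ is killed by the factor $CQ^iC=0$ (because $C^2=I$ reduces this to a conjugation applied to $Q^i$). Hence only indices with $i\leq p-1$ and $j\leq p-1$ can contribute. For such indices, the constraint $i+j+k=m+2p-2$ forces
$$k\geq m+2p-2-2(p-1)=m.$$

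The second reduction uses the property, noted in the paper just before Proposition \ref{pro21}, that an $(m,C)$-isometric operator is automatically a $(k,C)$-isometric operator for every $k\geq m$; equivalently, $\Lambda_k(T)=0$ for all $k\geq m$. This can be obtained by induction from the identity $\Lambda_{k+1}(T)=T^*\Lambda_k(T)(CTC)-\Lambda_k(T)$ already recalled in the proof of Lemma \ref{lem21}. Combined with the previous paragraph, every surviving term contains the factor $\Lambda_k(T)=0$, so $\Lambda_{m+2p-2}(T+Q)=0$, which is exactly the statement that $T+Q$ is an $(m+2p-2,C)$-isometry.

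I do not expect a serious obstacle: the commutativity $TQ=QT$ enters only to justify the multinomial expansion in Lemma \ref{lem24}, and the conjugation-compatibility hypotheses needed for the other theorems in this section play no role here since Lemma \ref{lem24} is available unconditionally. The heart of the argument is the bookkeeping above, namely that the three possible ways a summand can vanish, $i\geq p$, $j\geq p$, or $k\geq m$, together cover the whole index set $\{(i,j,k):i+j+k=m+2p-2\}$ by a pigeonhole count, and this is precisely why the bound $m+2p-2$ is the correct one.
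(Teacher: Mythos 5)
Your proposal is correct and follows essentially the same route as the paper's own proof: invoke Lemma \ref{lem24}, kill the terms with $\max\{i,j\}\geq p$ by nilpotency of $Q$, and note that the remaining terms force $k\geq m$ so that $\Lambda_k(T)=0$. Your extra justification that $\Lambda_k(T)=0$ for all $k\geq m$ via the recursion $\Lambda_{k+1}(T)=T^*\Lambda_k(T)(CTC)-\Lambda_k(T)$ is a welcome detail the paper leaves implicit.
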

\begin{proof}
We need to show $$\Lambda_{m+2p-2}\big(T+Q\big)=0.$$
In view of  Lemma \ref{lem24}  we have
$$\Lambda_{m+2p-2}\big(T+Q\big)=\sum_{ i+j+k=m+2p-2}\binom{m+2p-2}{i, j,k}\big(T^*+Q^*\big)^iQ^{*j}\Lambda_{k}(T)CT^jCCQ^iC.$$
\noindent \rm(i) If $\max\{i, j\}\geq p$, then $CQ^iC=0$ or $Q^{*j}=0.$ \par \vskip 0.2 cm \noindent \rm(ii)  If $\max\{i, j\}\leq p-1,$ then $k\geq m$ and hence $\Lambda_k(T)=0.$ \par \vskip 0.2 cm \noindent From \rm(i) and \rm(ii) we get $\Lambda_{m+2p-2}\big(T+Q\big)=0.$
\end{proof}
\par \vskip 0.3 cm \noindent
In the following theorem we investigate the nilpotent perturbations of an $n$-quasi-$(m,C)$-isometric operator.
\par\vskip 0.3 cm \noindent
\begin{theorem}\label{th28}
Let $ T$ and $ Q \in {\mathcal B}({\mathcal H})$. Assume that $TQ=QT$ commutes, $TCQC=CQCT$ and $ TCTC=CTCT$ where $C$ is a conjugation on ${\mathcal H}$. If $T$  is an $n$-quasi-$(m, C)$-isometric operator and $Q$
is a nilpotent operator of order $p$. Then $T+Q$ is a $(n+p)$-quasi-$(m +2p-2,C)$-isometric operator.
\end{theorem}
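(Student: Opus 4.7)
The plan is to reduce the goal to showing $T^{*n}\Lambda_{m+2p-2}(T+Q)T^n=0$ and then to expand $\Lambda_{m+2p-2}(T+Q)$ via Lemma \ref{lem24} and argue termwise. First I would exploit $TQ=QT$ together with $Q^{p}=0$ to factor
$$(T+Q)^{n+p}=\sum_{r=0}^{p-1}\binom{n+p}{r}Q^{r}T^{n+p-r}=T^{n}A,\qquad A:=\sum_{r=0}^{p-1}\binom{n+p}{r}Q^{r}T^{p-r},$$
so that $(T+Q)^{*(n+p)}=A^{*}T^{*n}$. Consequently
$$(T+Q)^{*(n+p)}\Lambda_{m+2p-2}(T+Q)(T+Q)^{n+p}=A^{*}\bigl(T^{*n}\Lambda_{m+2p-2}(T+Q)T^{n}\bigr)A,$$
so it suffices to prove that the middle factor vanishes.

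Next I would invoke Lemma \ref{lem24} to write
$$\Lambda_{m+2p-2}(T+Q)=\sum_{i+j+k=m+2p-2}\binom{m+2p-2}{i,j,k}(T+Q)^{*i}Q^{*j}\Lambda_{k}(T)\,CT^{j}C\,CQ^{i}C,$$
and then commute $T^{*n}$ and $T^{n}$ inwards in each summand until they flank $\Lambda_{k}(T)$. Taking adjoints of $TQ=QT$ shows that $T^{*n}$ commutes with every operator built from $T^{*}$ and $Q^{*}$, in particular with $(T+Q)^{*i}Q^{*j}$. The hypotheses $T(CTC)=(CTC)T$ and $T(CQC)=(CQC)T$ yield that $T$ commutes with $CT^{j}C=(CTC)^{j}$ and with $CQ^{i}C=(CQC)^{i}$, so $T^{n}$ commutes with $CT^{j}C\,CQ^{i}C$. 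After this rearrangement every summand of $T^{*n}\Lambda_{m+2p-2}(T+Q)T^{n}$ takes the shape
$$(T+Q)^{*i}Q^{*j}\bigl(T^{*n}\Lambda_{k}(T)T^{n}\bigr)CT^{j}C\,CQ^{i}C.$$

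Finally, a pigeonhole argument kills every term. Because $i+j+k=m+2p-2$, at least one of $i\geq p$, $j\geq p$, $k\geq m$ must hold; otherwise $i+j+k\leq (p-1)+(p-1)+(m-1)=m+2p-3$, a contradiction. If $i\geq p$ then $CQ^{i}C=0$; if $j\geq p$ then $Q^{*j}=0$; and if $k\geq m$, then Proposition \ref{pro21} applied to the $n$-quasi-$(m,C)$-isometric operator $T$ gives $T^{*n}\Lambda_{k}(T)T^{n}=0$. Each summand therefore vanishes, which yields $T^{*n}\Lambda_{m+2p-2}(T+Q)T^{n}=0$ and hence the theorem. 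The main, though elementary, obstacle is the bookkeeping of the three commutation hypotheses: they are precisely what is needed to shuttle $T^{*n}$ and $T^{n}$ past the adjoint and conjugated factors without disturbing $\Lambda_{k}(T)$, so that the $n$-quasi-isometric property of $T$ can be invoked termwise.
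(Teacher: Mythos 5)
Your proposal is correct, and its core is the same as the paper's: expand $\Lambda_{m+2p-2}(T+Q)$ via Lemma \ref{lem24} and kill each term by the pigeonhole $i\geq p$, $j\geq p$, or $k\geq m$ (forced by $i+j+k=m+2p-2$). Where you genuinely diverge is in handling the outer factors: the paper expands $(T+Q)^{*(n+p)}$ and $(T+Q)^{n+p}$ binomially and then runs a case analysis over the resulting indices $r,s$, whereas you use $TQ=QT$ and $Q^{p}=0$ to factor $(T+Q)^{n+p}=T^{n}A$ with $A=\sum_{r=0}^{p-1}\binom{n+p}{r}Q^{r}T^{p-r}$ commuting with $T^{n}$, reducing everything to the single identity $T^{*n}\Lambda_{m+2p-2}(T+Q)T^{n}=0$. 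This is cleaner and arguably tightens the paper's write-up, whose double binomial expansion is somewhat loosely justified; your version also makes transparent exactly where each commutation hypothesis is used (adjoints of $TQ=QT$ to slide $T^{*n}$ inward, $TCTC=CTCT$ and $TCQC=CQCT$ to slide $T^{n}$ inward). One small repair: for the case $k\geq m$ you should cite Lemma \ref{lem21} rather than Proposition \ref{pro21}. Proposition \ref{pro21} assumes $C=C_1\oplus C_2$ relative to $\overline{{\mathcal R}(T^n)}\oplus{\mathcal N}(T^{*n})$, which is not among the hypotheses of Theorem \ref{th28}; Lemma \ref{lem21}, whose hypothesis $T(CTC)=(CTC)T$ is exactly assumed here, gives precisely $T^{*n}\Lambda_{k}(T)T^{n}=0$ for all $k\geq m$. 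With that substitution the argument is complete.
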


\begin{proof} We need to show $$\big(T+Q\big)^{\gamma}\Lambda_{m+2p-2}\big(T+Q\big)\big(T+Q\big)^{\gamma}=0\;\text{where}\;  \gamma=n+p.$$
In view of  Lemma \ref{lem24}  we have
$$\Lambda_{m+2p-2}\big(T+Q\big)=\sum_{ i+j+k=m+2p-2}\binom{m+2p-2}{i, j,k}\big(T^*+Q^*\big)^iQ^{*j}\Lambda_{k}(T)CT^jCCQ^iC$$
and
\begin{eqnarray*}
&&\big(T+Q\big)^{*\gamma}\Lambda_{m+2p-2}\big(T+Q\big)\big(T+Q\big)^{\gamma}\\&=&
\bigg(\sum_{0\leq r\leq 2\gamma}\binom{\gamma}{r}T^{*(\gamma-r)}Q^{*r}\bigg)\bigg(\sum_{i+j+k=m+2p-2}\binom{m+2p-2}{i,j,k}\big(T^*+Q^*\big)^iQ^{*j}\Lambda_{k}(T)CT^jCCQ^iC \bigg)\\&&\times \bigg(\sum_{0\leq s\leq 2\gamma}\binom{\gamma}{s}T^{\gamma-s}Q^{s}\bigg).
\end{eqnarray*}
Now observe that if  $\max\{i, j\}\geq p$, then $CQ^jC=0$ or $Q^i=0.$
 and hence $$\big(T^*+Q^*\big)^iQ^{*j}\Lambda_{k}(T)CT^jC CQ^iC=0.$$
\noindent However , if  $\max\{ i, j\}\leq p-1$, then $k\geq m$.
Using the fact that $T$ is an $n$-quasi-$m$-isometry and $TCTC=CTCT$, we get
$$T^{*(n+p-r)}\Lambda_{k}(T)T^{n+p-s}=0\;\;\;\text{for}\;\;r\in \{\;0,\cdots,p\;\}\;\;\text{and}\;s \in \{\;0,\cdots,p \}$$  and
$$T^{*(n+p-r)}Q^{*r}\Lambda_k(T)T^{n+p-s}Q^s=0\;\;\;\text{for}\;\;r\in \{\;p+1,\cdots,n+p \} \;\;\text{and}\;s\in \{\;p+1,\cdots,n+p \}.$$

Combining the above arguments  we deduce that
$$\big(T+Q\big)^{n+p}\Lambda_{m+2p-2}\big(T+Q\big)\big(T+Q\big)^{n+p}=0.$$
Thus $T+Q$ is a $(n+p)$-quasi-$(m+2p-2)$-isometric operator. Therefore the theorem is proved.
\end{proof}

\begin{example}
Let $C$ be a conjugation on $\mathbb{C}^3 $ defined by $ C(x_1, x_2 ,x_3)= ( \overline{x_3},\overline{x_2},\overline{x_1})$ and consider the
operator matrix $T=\left(
                     \begin{array}{ccc}
                       1 & 0 & \alpha \\
                       0 & 1& 0 \\
                        0& 0 & 1 \\
                     \end{array}
                   \right)
$ on $\mathbb{C}^3$. Then $T=I+Q$. Since $Q^2=0$, it follows from Theorem \ref{th28}  that $T$ is a $(n+2)$-quasi-$(m+2,C)$-isometric operator.
\end{example}

\end{document}